\newcommand{\girth}{\mathrm{girth}}
\newcommand{\R}{\mathbb{R}}
\newcommand{\eps}{\varepsilon}
\newcommand\blfootnote[1]{%
  \begingroup
  \renewcommand\thefootnote{}\footnote{#1}%
  \addtocounter{footnote}{-1}%
  \endgroup
}
\newtheorem{theorem}{Theorem}[section]
\newtheorem{lemma}{Lemma}[section]
\newtheorem{prop}{Proposition}[section]
\theoremstyle{definition}
\newtheorem{remark}{Remark}[section]
\title{High-girth near-Ramanujan graphs with localized
eigenvectors}
\author{Noga Alon \thanks{
Research supported in part by
NSF grant DMS-1855464, ISF grant 281/17, BSF grant
2018267
and the Simons Foundation. Email: nalon@math.princeton.edu}
\\Princeton University \and Shirshendu Ganguly \thanks{sganguly@berkeley.edu; partially supported by a Sloan Research Fellowship in mathematics and NSF Award DMS-1855688.}\\ UC Berkeley \and Nikhil Srivastava \thanks{nikhil@math.berkeley.edu; supported by NSF grant CCF-1553751.}\\UC Berkeley}
\date{}
\begin{document}
\maketitle
\begin{abstract}
	We show that for every prime $d$ and $\alpha\in (0,1/6)$, there is an
	infinite sequence of $(d+1)$-regular graphs $G=(V,E)$ with girth at least $2\alpha \log_{d}(|V|)(1-o_d(1))$, second adjacency matrix eigenvalue
	bounded by $(3/\sqrt{2})\sqrt{d}$, and many eigenvectors fully
	localized on small sets of size $O(|V|^\alpha)$. This strengthens the
	results of \cite{GS}, who constructed high girth (but not expanding)
	graphs with similar properties, and may be viewed as a discrete
	analogue of the ``scarring'' phenomenon observed in the study of
	quantum ergodicity on manifolds. Key ingredients in the proof are a
	technique of Kahale \cite{kahale} for bounding the growth rate of
	eigenfunctions of graphs, discovered in the context of vertex
	expansion and a method of Erd\H{o}s and Sachs for constructing
        high girth regular graphs.
\end{abstract}
\blfootnote{This work was partially completed while SG and NS were at the Simons Institute for the Theory of Computing, as part of the ``Geometry of Polynomials'' program.}

\section{Introduction}
We study the relationship between geometric properties of finite regular graphs, such as girth and expansion,
and localization properties of their Laplacian / adjacency matrix eigenvectors. This line of work was initiated by Brooks and Lindenstrauss,
who proved that the eigenvectors of high girth  graphs cannot be too localized in the following sense (in fact, they studied graphs with few short cycles, but we will state
the restriction of their results for high girth graphs for simplicity).
\begin{theorem}[\cite{bl}]\label{thm:bl}
	Suppose $G=(V,E)$ is a $(d+1)-$regular graph with adjacency matrix $A$.
	Then for any normalized $\ell_2$ eigenvector $v\in\R^V$
	of $A$ and $S\subset V$ with $\|v_S\|_2^2:=\sum_{x\in S}v^2_x\ge \varepsilon,$
	\begin{equation}\label{eqn:bl}|S|\ge \Omega_{d}(\eps^{2}d^{2^{-7}\eps^2\girth(G)}),\end{equation}
	where $\girth(G)$ denotes the length of the shortest cycle in $G$.
\end{theorem}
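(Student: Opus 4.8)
The plan is to exploit the fact that large girth makes $G$ locally a tree, combined with a sphere-averaging recursion for eigenfunctions --- essentially Kahale's device for controlling the growth of eigenfunctions on locally tree-like graphs. Write $g=\girth(G)$ and let $m=\lfloor (g-1)/2\rfloor$, so that for every $x\in V$ the ball $B_m(x)$ is a tree, hence isometric to a radius-$m$ ball of the infinite $(d+1)$-regular tree $\mathcal T$; in particular the sphere sizes $w_j:=|\{y\in V:\mathrm{dist}(x,y)=j\}|$ equal the tree values $w_0=1$ and $w_j=(d+1)d^{j-1}$ for $1\le j\le m$. Let $(p_j)_{j\ge0}$ be the orthonormal polynomials of the Kesten--McKay spectral measure of $\mathcal T$ at a vertex --- equivalently, the polynomials attached to the tridiagonal Jacobi matrix with zero diagonal and off-diagonal entries $\sqrt{d+1},\sqrt d,\sqrt d,\dots$ --- characterized on $\mathcal T$ by $p_j(A_{\mathcal T})\delta_o=w_j^{-1/2}\,\mathbf{1}_{\{\mathrm{dist}(\cdot,o)=j\}}$. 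Since $p_j(A_G)\delta_x$ depends only on the ball $B_j(x)$, which is a tree for $j\le m$, we obtain for all $x\in V$ and $0\le j\le m$ the exact identity
\[
p_j(\lambda)\,v(x)\;=\;\langle p_j(A_G)\delta_x,\,v\rangle\;=\;w_j^{-1/2}\!\!\!\sum_{\mathrm{dist}(y,x)=j}\!\!\!v(y),
\]
which is the engine of the whole argument.

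From this identity the contribution of each sphere to $v$ is rigid. First, Cauchy--Schwarz on the sphere of radius $j$ around $x$ gives $\sum_{\mathrm{dist}(y,x)=j}v(y)^2\ge p_j(\lambda)^2\,v(x)^2$; summing over $j\le m$ and using $\|v\|_2=1$ yields the Christoffel-type bound $v(x)^2\le\bigl(\sum_{j=0}^m p_j(\lambda)^2\bigr)^{-1}$, and a short analysis of the three-term recurrence (the $p_j$ are Chebyshev-like) shows $\sum_{j\le m}p_j(\lambda)^2\ge\Omega_d(m)$ for every $|\lambda|\le d+1$, so $\|v\|_\infty^2=O_d(1/m)$. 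For the exponential bound one needs the reverse inequality. The same recurrence analysis shows that for every $|\lambda|\le d+1$ at least one of $|p_{m-1}(\lambda)|,\,|p_m(\lambda)|$ is bounded below by an absolute constant $c_0=c_0(d)>0$; call the corresponding radius $j^\star\in\{m-1,m\}$. Reverse Cauchy--Schwarz on the $j^\star$-sphere around any vertex $x$, using $\sum_{\mathrm{dist}(y,x)=j^\star}v(y)^2\le\|v\|_2^2=1$, then gives
\[
\bigl|\{y:\mathrm{dist}(y,x)=j^\star,\ v(y)\ne0\}\bigr|\;\ge\;\frac{\bigl(\sum_{\mathrm{dist}(y,x)=j^\star}v(y)\bigr)^2}{\sum_{\mathrm{dist}(y,x)=j^\star}v(y)^2}\;=\;\frac{w_{j^\star}\,p_{j^\star}(\lambda)^2\,v(x)^2}{\sum_{\mathrm{dist}(y,x)=j^\star}v(y)^2}\;\ge\;c_0^2\,w_{j^\star}\,v(x)^2 .
\]
Since $w_{j^\star}\ge d^{m-1}$, already this one sphere forces $|\mathrm{supp}(v)|\ge\Omega_d\bigl(v(x)^2\,d^{m}\bigr)$. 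Taking $x$ to maximize $|v|$, so that $v(x)^2=\|v\|_\infty^2\ge 1/|\mathrm{supp}(v)|$, and feeding this back gives $|\mathrm{supp}(v)|^2\ge\Omega_d(d^{m})$, i.e.\ $|\mathrm{supp}(v)|\ge\Omega_d(d^{g/4})$ --- which proves \eqref{eqn:bl} in the case $\eps=1$.

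What remains --- and the step I expect to be the main obstacle --- is to obtain the $\eps$-dependent bound \eqref{eqn:bl} when $\|v_S\|_2^2$ is only $\eps\in(0,1)$. A large support no longer suffices: for bulk eigenvalues the Christoffel bound only certifies $\|v\|_\infty^2=O(1/g)$, so $\eps$ of the mass could a priori sit on $\Theta(\eps g)$ vertices, whereas \eqref{eqn:bl} demands exponentially many. The plan is to first localize --- by a pigeonhole over balls --- to a vertex $x_0\in S$ where $v$ is comparatively heavy (one can arrange $v(x_0)^2\gtrsim\eps/|S|$) and a scale $\rho$, of order $\eps^2 g$ as the statement suggests, on which the mass of $v$ carried by $S$ is still a constant fraction of the mass of $v$ on $B_\rho(x_0)$; then one re-runs the sphere argument inside $B_\rho(x_0)$, decomposing each sphere into its part in $S$ and its complement and analysing the quadratic form $\sum_{x\in S}\bigl(\sum_{\mathrm{dist}(y,x)=j}v(y)\bigr)^2=w_j\,p_j(\lambda)^2\,\|v_S\|_2^2$. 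Its diagonal is harmless (at most $|S|$ once $|S|\le w_j$), so the difficulty lies entirely in the off-diagonal sphere-overlap terms $\sum_{y\ne y'}v(y)v(y')\,\bigl|\{x\in S:\mathrm{dist}(x,y)=\mathrm{dist}(x,y')=j\}\bigr|$: estimating them by the triangle inequality is hopelessly lossy, and one must instead quantify the cancellation reflecting that the sphere-sums $\sum_{\mathrm{dist}(y,x)=j}v(y)=w_j^{1/2}\,p_j(\lambda)\,v(x)$ attached to different centers $x$ are far from aligned. It is this bookkeeping, rather than any single estimate, that degrades the clean exponent $g/4$ down to the stated $2^{-7}\eps^2 g$ (and introduces the $\eps^2$ prefactor).
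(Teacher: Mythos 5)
First, a point of scope: the paper does not prove Theorem \ref{thm:bl} at all --- it is imported verbatim from Brooks--Lindenstrauss \cite{bl} --- so there is no in-paper argument to compare yours against; I can only assess your proposal as a standalone proof, and as such it is incomplete.

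What you do prove is correct and clean: the identity $p_j(\lambda)v(x)=w_j^{-1/2}\sum_{\mathrm{dist}(y,x)=j}v(y)$ is valid for $j\le m$ since $B_m(x)$ is a tree, the Christoffel-type sup-norm bound follows, and the reverse Cauchy--Schwarz step on a single sphere combined with $\|v\|_\infty^2\ge 1/|\mathrm{supp}(v)|$ does give $|\mathrm{supp}(v)|\ge\Omega_d(d^{g/4})$. (One small caveat: the claim that $\max\{|p_{m-1}(\lambda)|,|p_m(\lambda)|\}\ge c_0(d)$ \emph{uniformly} in $\lambda$ needs care near the spectral edges $\pm2\sqrt d$, where the transfer matrix becomes parabolic and the naive elliptic-conjugation constant degenerates like $1/\sin\theta$; this can be patched, but it is not automatic from ``Chebyshev-like''.) This settles only the case $\eps=1$, i.e.\ a lower bound on the full support.

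The genuine gap is exactly where you flag it: the theorem's content is the case $\|v_S\|_2^2=\eps<1$, and for that you have a plan, not a proof. The pigeonhole localization to a heavy $x_0\in S$ is fine, but ``re-running the sphere argument inside $B_\rho(x_0)$'' does not go through as described: your basic identity constrains the sum of $v$ over an \emph{entire} sphere, and says nothing directly about $S\cap S_j(x)$, so you are forced into the quadratic form $\sum_{x\in S}\bigl(\sum_{\mathrm{dist}(y,x)=j}v(y)\bigr)^2=w_j\,p_j(\lambda)^2\|v_S\|_2^2$, whose off-diagonal terms $\sum_{y\ne y'}v(y)v(y')\,|\{x\in S:\mathrm{dist}(x,y)=\mathrm{dist}(x,y')=j\}|$ you explicitly do not control. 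That off-diagonal control \emph{is} the theorem; without it nothing rules out $\eps$ of the mass sitting on $O(\eps g)$ vertices, consistent with your own $\|v\|_\infty^2=O(1/g)$ bound. The actual arguments in the literature circumvent this expansion entirely: \cite{bl} bound the operator norm of suitable (Chebyshev-like) polynomials of $A$ applied to vectors supported on $S$, and \cite{GS} (which is the proof most closely aligned with your sphere/radial-function setup, and which yields the sharper bound \eqref{eqn:improvement}) runs a quantitative mass-propagation argument on the tree cover. To complete your proof you would need to supply one of these mechanisms, or an equivalent cancellation estimate for the sphere-overlap counts; as written, the final paragraph is a statement of the difficulty rather than a resolution of it.
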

The recent work \cite{GS} improved \eqref{eqn:bl} to 
\begin{equation}\label{eqn:improvement}
	|S|\ge \frac{\eps d^{\eps \girth(G)/4}}{2d^2},
\end{equation}
under the same assumptions on $G$.  Moreover, given any $\varepsilon\in (0,1),$ they proved that for infinitely
	many $m\in\mathbb{N}$ there is a $(d+1)-$regular graph $G_m$ with $m$ vertices, $\girth(G_m)\ge (1/8)\log_d(m)$, and  a localized eigenvector satisfying $\|v_S\|_2^2=\varepsilon$ and $|S|\le O(d^{4\varepsilon\girth(G_m)})$. 
	This shows that \eqref{eqn:improvement} is sharp up to constants and a factor of $\varepsilon d^{-2}$ in the 
	regime where the girth is logarithmic in the number of vertices.

In this work, we construct examples which improve on the above in three ways: (1) the graphs we construct are expanders with near-optimal
spectral gap, (2) we improve the bounds on $\girth(G_m)$ as well as the localization size $|S|$ by constant factors, (3) our constructions
are explicit whereas \cite{GS} used the probabilistic method to show existence non-constructively.
\begin{theorem}
\label{thm:totalloc} 
For every $d=p+1$, $p$ prime, and parameter $\alpha\in (0,1/6)$ there are infinitely many integers $m$ such that there exists a  $(d+1)-$regular graph $G_{m}=(V_{m},E_{m})$ on $m$ vertices with the following properties,
	\begin{enumerate}
		\item $|\lambda_i(A_{m})|\le (3/\sqrt{2})\sqrt{d}$ for all nontrivial eigenvalues $i\neq 1$ of the adjacency matrices $A_{m}$.
		\item $\girth(G_{m})\ge
2\alpha\log_{2d-1}(m)-O(1) = 2\alpha\log_d(m)\cdot (1-O(\log^{-1}(d))).$
		\item There is a set $S_{m}\subset V_{m}$ of size  $O(m^{\alpha})$ such that $A_{m}$ has at least $\ell_m:=\lfloor\alpha\log_d(m)\rfloor$ eigenvalues
			$\lambda\in (-2\sqrt{d},2\sqrt{d})$ with corresponding eigenvectors $v:A_{m}v=\lambda v$ supported entirely on $S_{m}$.
\end{enumerate} 
	{Moreover, the set of eigenvalues $\lambda$ realized by the localized eigenvectors of $A_{m},$ over all such $m$ is dense in the interval $(-2\sqrt{d},2\sqrt{d})$.}
\end{theorem}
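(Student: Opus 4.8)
\medskip
\noindent\textbf{Proof strategy.}
The plan is to build $G_m$ by \emph{surgery}: take a $(d+1)$-regular base graph $B_m$ that is near-Ramanujan and has girth $\Omega(\log m)$ (available in this degree range; the primality of $p$ enters here to invoke an explicit Ramanujan graph, though a random regular graph would also serve), delete a sparse, well-separated set of edges, and graft on a tree-like \emph{gadget} $H_m$ on $\Theta(m^{\alpha})$ vertices. The gadget is a subgraph that is $(d+1)$-regular at every vertex except on a small \emph{interface} $I_m$ with $|I_m|=O(m^{\alpha})$; it is assembled from depth-$r$ neighbourhoods of the $(d+1)$-regular tree with $r=\lceil\alpha\log_d m\rceil+O(1)$, e.g.\ two such balls joined antisymmetrically through their leaves so that $I_m$ stays small, and one fills $I_m$ by wiring it to the half-edges freed in $B_m$. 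Since $H_m$ is a forest, every cycle of $G_m$ either avoids $V(H_m)$ --- and so is as long as $\girth(B_m)$ --- or must cross $I_m$; by keeping the deleted edges far apart in $B_m$ and using an Erd\H{o}s--Sachs-type exchange argument to realize the wiring, every crossing cycle is long as well, which yields the girth bound of item (2), $\girth(G_m)\ge 2\alpha\log_{2d-1}(m)-O(1)$.

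\medskip
\noindent\textbf{Localized eigenvectors (item (3)).}
The mechanism is that any eigenvector of the adjacency operator of $H_m$ that \emph{vanishes on $I_m$} extends by zero to an eigenvector of $A_m$ supported on $V(H_m)$: the only vertices of $G_m$ with neighbours outside $V(H_m)$ lie in $I_m$, where the function is $0$, so the eigenvalue equation is preserved across the surgery. To exhibit enough such eigenvectors I would look at \emph{radial} functions on the gadget tree, which turns the eigenvalue equation into the three-term recurrence $w_{s+1}+d\,w_{s-1}=\lambda w_s$ with a Dirichlet condition $w_0=0$ at the interface and a condition $(d+1)w_{r-1}=\lambda w_r$ at the other end; the substitution $w_s=d^{s/2}u_s$ reduces this to the free recurrence $u_{s+1}+u_{s-1}=(\lambda/\sqrt d)u_s$, so $u_s\propto\sin(s\theta)$ with $\lambda=2\sqrt d\cos\theta$, and the remaining boundary condition becomes the transcendental equation $\tan(r\theta)=-\tfrac{d+1}{d-1}\tan\theta$. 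A sign-change count gives at least $r-O(1)\ge\ell_m$ solutions $\theta\in(0,\pi)$; each yields a nonzero radial eigenfunction on $H_m$ with eigenvalue $\lambda=2\sqrt d\cos\theta\in(-2\sqrt d,2\sqrt d)$ (the extreme and non-oscillatory cases are easily excluded), and these are pairwise distinct and supported on $S_m:=V(H_m)$, $|S_m|=O(m^{\alpha})$.

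\medskip
\noindent\textbf{The near-Ramanujan bound (item (1)) --- the main obstacle.}
Writing $A_m=(A_{B_m}\oplus A_{H_m})+E$ with $E$ the bounded-norm matrix of the surgered edges, a crude triangle inequality only gives $2\sqrt d+O(1)$, which is worse than $(3/\sqrt2)\sqrt d$ when $d$ is small, so one needs a genuinely finer estimate. I would bound the moments $\operatorname{tr}\!\big((A_m')^{2k}\big)$ of the restriction $A_m'$ of $A_m$ to $\mathbf 1^\perp$, for $k=\Theta(\log m)$, by sorting closed walks according to how their length is split between $B_m$ and the tree $H_m$: walks staying inside $B_m$ are governed by its near-Ramanujan bound, and a walk that visits $H_m$ must pass through the $O(m^{\alpha})$-size interface and, being inside a tree, is sharply limited in how it can return --- this last point is what the Kahale growth-rate estimate for eigenfunctions makes quantitative. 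Optimizing the resulting count over the split gives $\operatorname{tr}\!\big((A_m')^{2k}\big)\le m\cdot\operatorname{poly}(k)\cdot\big((3/\sqrt2)\sqrt d\big)^{2k}$, hence $|\lambda_i(A_m)|\le(3/\sqrt2)\sqrt d$; the value $\tfrac{3}{\sqrt2}=\sqrt2+\tfrac1{\sqrt2}$ is precisely what the worst split contributes. I expect this to be the hardest step: it must hold uniformly in $d$, and it requires controlling walks that repeatedly leave and re-enter a large non-expanding induced subgraph.

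\medskip
\noindent\textbf{Density.}
By item (3) the eigenvalues realized by localized eigenvectors of $A_m$ contain every $2\sqrt d\cos\theta$ with $\theta$ a solution of $\tan(r_m\theta)=-\tfrac{d+1}{d-1}\tan\theta$, and $r_m\to\infty$ along the family. Fix $\theta^*\in(0,\pi)\setminus\{\pi/2\}$ and small $\delta>0$: on $(\theta^*-\delta,\theta^*+\delta)$ the right-hand side changes by $O(\delta)$ while $\tan(r_m\theta)$ runs monotonically from $-\infty$ to $+\infty$ on each of the $\sim 2r_m\delta/\pi$ branch subintervals it contains, so for $r_m$ large the equation has a solution within $\delta$ of $\theta^*$; thus the solution sets become an $O(1/r_m)$-net of $(0,\pi)$. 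Since $\theta\mapsto 2\sqrt d\cos\theta$ is a homeomorphism from $(0,\pi)$ onto $(-2\sqrt d,2\sqrt d)$, the union over $m$ of the localized eigenvalues is dense in $(-2\sqrt d,2\sqrt d)$, which is the final assertion.
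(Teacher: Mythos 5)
Your construction, your mechanism for item (3) (antisymmetric radial eigenfunctions on two glued tree-balls, vanishing on the interface and extended by zero), and your density argument via the secular equation $\tan(r\theta)=-\tfrac{d+1}{d-1}\tan\theta$ all match the paper's route in substance; the paper realizes the gadget by cutting a matching inside an LPS Ramanujan graph $H$ between the leaves of a depth-$r$ ball and the sphere of radius $r+1$, gluing two external trees onto the freed half-edges via an Erd\H{o}s--Sachs exchange, and quoting the radial-eigenvalue lemmas of \cite{GS} where you rederive them. These parts are fine.

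The genuine gap is item (1). Your plan is to bound $\operatorname{tr}\bigl((A_m')^{2k}\bigr)$ for $k=\Theta(\log m)$ by classifying closed walks, but for a \emph{fixed, deterministic} graph this runs into the standard obstruction: to get $m^{1/2k}=O(1)$ you need $k\gtrsim \log m$, at which point walks are longer than the girth and the count is no longer tree-like; worse, the restriction to $\mathbf 1^\perp$ replaces $A_m$ by $A_m-\tfrac{d+1}{m}J$, whose ``walks'' have no clean combinatorial meaning, and the alternative of computing $\operatorname{tr}(A_m^{2k})-(d+1)^{2k}$ requires a cancellation you have not controlled. The claim that ``the worst split contributes $\sqrt2+\tfrac1{\sqrt2}$'' is numerology, not a derivation, and Kahale's lemma is a statement about eigenfunctions, not walks, so it cannot be invoked inside a walk count. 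The paper's actual argument is entirely different and works at the level of the Rayleigh quotient: writing $A_G=A_H-A_M+A_{T_2}+A_{T_3}$, each piece contributes at most $2\sqrt d\|g\|^2$ plus an excess of $(\sqrt d+1)\sum_{u\in L_1\cup L_2}g^2(u)$ concentrated on the interface, so everything reduces to showing that an eigenvector with $|\mu|\ge(b+\varepsilon)\sqrt d$, $b=\tfrac{3d-1}{\sqrt{d(2d-1)}}=c+1/c$ with $c=\sqrt{(2d-1)/d}$, carries negligible mass on $L_1\cup L_2$. That is done with Kahale's Lemma 5.1: one exhibits an explicit positive super-harmonic test function $s$ on the distance classes from $\{u,u'\}$ (and from $\{v'\}$), geometric with ratio $c/\sqrt d$ inside the trees and with ratios $x_i/\sqrt d$ outside governed by $x_{i+1}=\min\{b+\varepsilon-1/x_i,\,c\}$, and the lemma forces $\sum_{X_j}g^2$ to grow away from the interface, so the interface mass is $o(1)$. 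This is the step your proposal is missing, and it is where the specific constant $b\le 3/\sqrt2$ actually comes from.
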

Note that the number $3/\sqrt{2}\cdot\sqrt{d}\approx 2.121\sqrt{d}$ above is quite close to the best possible bound of $2\sqrt{d}$ for an infinite sequence of regular graphs \cite{alonboppana}.
\begin{remark}\label{partial}(Partial Localization on Smaller Sets).
In fact the proof of Theorem \ref{thm:totalloc} produces eigenvectors $v,$ with the additional property that for $\varepsilon\in (0,1)$, there exists a subset $S$ of vertices with $|S|=O(m^{\Theta(\varepsilon) \alpha}),$ and $\|v_S\|_2^2\ge \Theta(\varepsilon)$. 
\end{remark}
%easily yields the following embellishment regarding partial localization.
%\begin{corollary}[Partial Localization on Smaller Sets]\label{corr:eps} Theorem
%	\ref{thm:totalloc} is true with the last property replaced by : for
%	every $\varepsilon\in (0,1)$, there is a normalized eigenvector
%	$A_{m}v=\lambda v$ with $\|v_S\|_2^2\ge \Theta(\varepsilon)$ for a set of size
%	$|S|=O(m^{\varepsilon \alpha}).$ \end{corollary}
Finally, we show how to modify our construction to produce many localized eigenvectors corresponding to eigenvalues with very high multiplicity.
\begin{theorem}[Many Localized Eigenvectors]\label{thm:many} Theorem \ref{thm:totalloc} is true with the last property replaced by: 
	there are $\ell_m:=\alpha \log_d(m)$ eigenvalues $\lambda_1,\ldots,\lambda_{\ell_m}$, each of multiplicity
	at least $\Omega(m^{1-4\alpha})$, such that each eigenspace has a basis of orthogonal eigenvectors supported on sets of size
	$O(m^{\alpha})$.
\end{theorem}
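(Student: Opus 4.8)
The plan is to run the construction behind Theorem~\ref{thm:totalloc} essentially verbatim, but with many pairwise-disjoint copies of the localizing gadget in place of one. Recall that that construction fixes a gadget $H$ --- a graph on $O(m^\alpha)$ vertices of large girth, with a small boundary set $\partial H$ through which it is attached to the rest of the graph --- and embeds a single copy of $H$ into a high-girth, near-Ramanujan host graph built by the Erd\H{o}s--Sachs method, arranged so that $H$ carries $\ell_m$ eigenfunctions with distinct eigenvalues in $(-2\sqrt d,2\sqrt d)$ vanishing on $\partial H$; extended by zero, these are the localized eigenvectors of $A_m$. For Theorem~\ref{thm:many} I would instead embed $k:=\Theta(m^{1-4\alpha})$ vertex-disjoint copies $H_1,\dots,H_k$ of $H$ into the host simultaneously.

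The first step is the (easy) spectral bookkeeping. For each special eigenvalue $\lambda_j$ ($j=1,\dots,\ell_m$), each copy $H_i$ contributes an eigenvector $\phi_j^{(i)}$ of $A_m$ with eigenvalue $\lambda_j$ supported on $V(H_i)$, $|V(H_i)|=O(m^\alpha)$; since the $H_i$ are disjoint, $\phi_j^{(1)},\dots,\phi_j^{(k)}$ are mutually orthogonal. Hence $\lambda_j$ has multiplicity at least $k=\Omega(m^{1-4\alpha})$, witnessed by $k$ orthogonal eigenvectors each supported on a set of size $O(m^\alpha)$. To promote this to the statement that the \emph{entire} $\lambda_j$-eigenspace has such a basis, I would show the multiplicity is exactly $k$: this is a routine local analysis --- a $\lambda_j$-eigenvector orthogonal to all the $\phi_j^{(i)}$ is forced to vanish on every $V(H_i)$ and hence to be a Dirichlet-type $\lambda_j$-eigenfunction of the host with the gadgets removed --- combined with the observation that $\lambda_1,\dots,\lambda_{\ell_m}$ is a fixed finite set of algebraic numbers depending only on $H$, so the freedom in the Erd\H{o}s--Sachs step lets us choose the host to avoid all of them in the relevant spectrum. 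The ``density of realized eigenvalues'' clause carries over exactly as in Theorem~\ref{thm:totalloc}, by using for different $m$ gadgets whose special eigenvalues approximate different points of $(-2\sqrt d,2\sqrt d)$.

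It remains to check that using $k$ disjoint gadgets rather than one preserves $(d+1)$-regularity, the girth lower bound, and the bound $|\lambda_i|\le(3/\sqrt2)\sqrt d$. Regularity depends only on the boundary structure of a single gadget. For the girth, each $H_i$ is individually of large girth, and one embeds the copies so that their boundaries are pairwise far apart and far from any short cycle of the host; the gadgets occupy only $k\cdot O(m^\alpha)=O(m^{1-3\alpha})=o(m)$ vertices, so the Erd\H{o}s--Sachs host still has enough room to be completed with $\girth(G_m)\ge 2\alpha\log_{2d-1}(m)-O(1)$ even after reserving a tree-like buffer of radius $\approx\alpha\log_d m$ around each copy. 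For the near-Ramanujan bound I would re-run the same non-backtracking-walk/trace estimate used for Theorem~\ref{thm:totalloc}: the $O(m^{1-3\alpha})$ gadget-boundary vertices are well separated, so closed walks that meet them contribute only lower-order terms and the estimate goes through unchanged. The main obstacle --- and what pins down both the exponent $1-4\alpha$ and the restriction $\alpha<1/6$ --- is precisely this simultaneous accounting: one must pack $\Theta(m^{1-4\alpha})$ gadgets of size $m^\alpha$ into the host while keeping a radius-$\approx\alpha\log_d m$ neighborhood of each tree-like (for the girth) and the global walk counts essentially unperturbed (for the spectral gap). Once the vertex and walk budgets are verified, the rest of the proof is a transcription of that of Theorem~\ref{thm:totalloc}.
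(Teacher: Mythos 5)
Your core idea for the step from Theorem~\ref{thm:totalloc} to Theorem~\ref{thm:many} --- replicate the localizing gadget at $k=\Theta(m^{1-4\alpha})$ pairwise-far locations, use vertex-disjointness of the supports to get $k$ mutually orthogonal eigenvectors per special eigenvalue, and justify the count $m^{1-4\alpha}$ by a packing argument (balls of radius $\approx 2r$ with $d^{4r}\approx m^{4\alpha}$) --- is exactly the paper's argument, which roots $k$ triples of trees at vertices of $H$ at pairwise distance at least $4r$ and invokes Lemma~\ref{lem:pack} for the count. Your observation that the whole-eigenspace clause requires knowing the multiplicity is not larger than $k$ is a fair point that the paper's proof does not address either; your proposed fix is speculative but the issue is real.

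There are, however, two substantive mismatches. First, you have the architecture of the construction inverted: the host is an explicit LPS Ramanujan graph $H$, the gadget is a pair of trees $T_2,T_3$ glued onto a ball in $H$, and the Erd\H{o}s--Sachs argument is used only for the leaf-pairing bijection inside the gadget (Lemma~\ref{l11}), not to build the host; so there is no ``freedom in the Erd\H{o}s--Sachs step'' to tune the host's spectrum. Second, and more importantly, your plan for preserving the bound $(3/\sqrt2)\sqrt d$ via a ``non-backtracking-walk/trace estimate'' refers to a technique that does not appear in, and would not obviously yield, Proposition~\ref{t31}: the graph is deterministic and genuinely fails to be Ramanujan (Remark~\ref{rem:vertex}), so the bound is certified by decomposing the quadratic form as $g^TA_Hg - g^TA_Mg + g^TA_{T_2}g+g^TA_{T_3}g$ and then killing the interface terms $\sum_{u\in L_1\cup L_2}g^2(u)$ with Kahale's Lemma~\ref{l33}, applied to an explicitly constructed super-harmonic test function $s$. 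The only change needed for Theorem~\ref{thm:many} is to take the base set $X$ in that lemma to be the union of all $k$ roots (resp.\ all $v_i$), which works precisely because the roots are at distance $\ge 4r$ so the relevant neighborhoods remain trees. Without this step your proposal does not actually establish property (1) for the modified graph.
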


\subsection{Implications for Quantum Ergodicity on Graphs}
The additional property of expansion in our examples is relevant to the study of quantum ergodicity on graphs. 
Anantharaman and Le Masson proved that if a sequence of graphs has 
few short cycles {\em and} a spectral gap, then the eigenvectors must be equidistributed on average in a sense stronger
than Theorem \ref{thm:bl}.
\begin{theorem}\label{thm:ananth}\cite{anantharaman, brooks2015} Suppose $G_m=(V_m,E_m)$ is a sequence of $(d+1)$-regular graphs on $m$ vertices with adjacency
	matrices $A_m$ satisfying:
	\begin{enumerate}
	\item [(BST)]  The sequence of graphs converges to a tree in the sense of
Benjamini-Schramm, i.e., there exist $R_m \to \infty$ and $\alpha_m \to 0$, such that  
		$$\frac1m |\{v\in V_m: N_m(v,R_m) \textrm{ contains a cycle}\}|\le \alpha_m,$$
			where $N_m(v,R)$ is the set of vertices at distance at most $R$ from $v,$ in $G_m$. Note that
			this condition is implied by $\girth(G_m)\rightarrow\infty$. 
	\item [(EXP)] There is a constant $\beta>0$ such that $$|\lambda_i(A_m)|<(d+1)(1-\beta),$$ for all nontrivial eigenvalues $i\neq 1$.
	\end{enumerate}
	Then for any sequence of test functions $a_m:V_m\rightarrow \mathbb{R}$ with $\sum_{x\in V_m}a_m(x)=0, \|a_m\|_\infty\le 1$:
	\begin{equation}
		\label{eqn:que}\frac{1}{m}\sum_{i\le m} |\langle \psi_i^{(m)}, a_m \psi_i^{(m)}\rangle|^2 \lesssim 
\beta^{-2} \min \{R_m, \log(1/\alpha_m)\}^{-1}
\|a_m\|_2^2+ \alpha_m^{1/2}\|a_m\|_{\infty}^2
\longrightarrow 0,
	\end{equation}
	where $\psi_1^{(m)},\ldots \psi_m^{(m)}$ is any eigenbasis of $A_m$.
\end{theorem}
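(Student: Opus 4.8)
Theorem~\ref{thm:ananth} is the quantum ergodicity bound of Anantharaman and Le Masson, and I would reconstruct it along the classical ``Egorov plus ergodicity'' template of Shnirelman-type arguments: hypothesis (BST) supplies the semiclassical (short-wavelength) regime on which a discrete Egorov theorem holds, while hypothesis (EXP) supplies the exponential mixing of the ``geodesic flow'' that makes the ergodic average converge. Fix $m$, abbreviate $A=A_m$ and $a=a_m$, and set the averaging horizon $N:=\lfloor c\,\min\{R_m,\log(1/\alpha_m)\}\rfloor$ for a small absolute constant $c$. Because $\sum_x a(x)=0$ the constant eigenvector contributes nothing, so it suffices to bound the \emph{quantum variance}
\[
\mathrm{Var}_m(a)\;:=\;\frac1m\sum_{i\le m}\big|\langle\psi_i^{(m)},a\,\psi_i^{(m)}\rangle\big|^2,
\]
which is exactly the left-hand side of \eqref{eqn:que}; below $\|\cdot\|_2$ denotes the $\ell^2$ norm for the uniform probability measure on $V_m$.

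\emph{Step 1 (a Fej\'er time-average dominates the variance).} Let $U=U_m$ be a unitary commuting with $A$ --- a discrete wave propagator built from $A$ by spectral calculus, whose conjugation acts on tree-like balls as one step of the non-backtracking geodesic flow (its construction, and the handling of eigenvalues outside the bulk $[-2\sqrt d,2\sqrt d]$, use (EXP) and are part of the framework). For the Ces\`aro average $\langle a\rangle_N:=\frac1N\sum_{t=0}^{N-1}U^{-t}aU^{t}$, expanding in an eigenbasis $U\psi_i=e^{i\theta_i}\psi_i$ gives
\[
\frac1m\|\langle a\rangle_N\|_{\mathrm{HS}}^2\;=\;\frac1m\sum_{i,j}F_N(\theta_i-\theta_j)\,\big|\langle\psi_i,a\psi_j\rangle\big|^2,\qquad F_N(\omega):=\frac1{N^2}\Big|\sum_{t=0}^{N-1}e^{it\omega}\Big|^2\ \ge\ 0,
\]
the Fej\'er kernel, with $F_N(0)=1$. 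Dropping the (nonnegative) off-diagonal terms yields $\mathrm{Var}_m(a)\le\frac1m\|\langle a\rangle_N\|_{\mathrm{HS}}^2$, and the right-hand side equals $\frac1{mN}\sum_{|s|<N}(1-|s|/N)\,\phi(s)$ for the two-point function $\phi(s):=\mathrm{Tr}(a\,U^{s}aU^{-s})=\sum_x a(x)\,(U^{s}aU^{-s})_{xx}$. So it remains to bound $\frac1{mN}\sum_{|s|<N}|\phi(s)|$.

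\emph{Step 2 (discrete Egorov, mixing, summation).} For $|s|<N\le R_m$ all but at most $\alpha_m m$ vertices have a tree ball of radius $|s|$, and on that part $U^{s}aU^{-s}$ coincides with the transport of $a$ by the length-$|s|$ non-backtracking walk; hence $\phi(s)=\langle\tilde a,\widehat B^{|s|}\tilde a\rangle+\mathcal E_s$, where $\tilde a$ is the lift of $a$ to oriented edges, $\widehat B$ the normalized non-backtracking transfer operator, and $\mathcal E_s$ an error contributed by the $O(\alpha_m m)$ vertices near cycles, with $|\mathcal E_s|\lesssim\alpha_m\,m\,\|a\|_\infty^2\,d^{\Theta(|s|)}$ (the exponential factor reflecting volume growth and the edge-sensitivity of the sphere-operator normalization). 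Since $\sum_x a(x)=0$, $\tilde a$ is mean-zero, and (EXP) --- via the Ihara determinant identity, which matches the nontrivial non-backtracking spectrum to $\{\lambda_i:|\lambda_i|<(d+1)(1-\beta)\}$ --- forces $\|\widehat B^{k}u\|\lesssim\mathrm{poly}(k)\,\rho^{k}\|u\|$ for mean-zero $u$, with $1-\rho\gtrsim\beta$. Thus $|\phi(s)|\lesssim\mathrm{poly}(|s|)\,\rho^{|s|}\,m\,\|a\|_2^2+|\mathcal E_s|$, while $\phi(0)=m\,\|a\|_2^2$. Summing over $|s|<N$: the mixing part contributes $\lesssim\beta^{-2}N^{-1}\|a\|_2^2$ (one power of $\beta^{-1}$ beyond the naive $(1-\rho)^{-1}$ absorbs the polynomial prefactor and the loss in passing from the adjacency spectrum to the \emph{non-normal} non-backtracking operator), and the error part sums to $\lesssim\alpha_m^{1/2}\|a\|_\infty^2$ precisely once $N\lesssim\log(1/\alpha_m)$. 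Recalling $N\asymp\min\{R_m,\log(1/\alpha_m)\}$, these yield \eqref{eqn:que}.

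\emph{Main obstacle.} The crux is the discrete Egorov estimate of Step 2: one must construct (or identify) the propagator $U$ and show that $U^{s}aU^{-s}$ really is the classical non-backtracking transport of $a$ up to an error genuinely controlled by the cycle density $\alpha_m$ and growing no faster than $d^{\Theta(|s|)}$ --- the latter being exactly what caps the ergodic average at the horizon $\log(1/\alpha_m)$. Entangled with this, obtaining the sharp $\beta$-dependence requires a quantitative contraction estimate for the non-backtracking walk on mean-zero functions, for which the spectral gap of $A$ is not directly an operator-norm statement: the non-backtracking operator is non-normal, so one must control the transient growth of its powers, e.g.\ through the Ihara/Bass determinant identity. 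Finally, the $1/N$ rate --- rather than an exponential one --- is intrinsic: it is the price of truncating the ergodic average at the Egorov horizon $R_m$, beyond which the quantum and classical dynamics diverge.
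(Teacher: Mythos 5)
The paper does not prove Theorem~\ref{thm:ananth}: it is imported verbatim from \cite{anantharaman, brooks2015} as background for interpreting the constructions, and no argument for it appears anywhere in the text. So there is no in-paper proof to compare your proposal against; the only fair assessment is of the proposal on its own terms as a reconstruction of the cited works.

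As such a reconstruction, your outline identifies the correct architecture (quantum variance bounded by a Fej\'er/Ces\`aro time average, a discrete Egorov statement on tree-like balls, and decay of correlations for the non-backtracking walk driven by (EXP) through the Ihara--Bass identity), but the two steps that constitute essentially all of the work are asserted rather than carried out, and one of them is not merely incomplete but problematic as stated. First, your Step~1 needs $U$ to be a genuine unitary commuting with $A$ so that the phases $\theta_i$ are real and the Fej\'er kernel positivity lets you drop the off-diagonal terms; but a finite $(d+1)$-regular graph generally has eigenvalues outside the tempered interval $[-2\sqrt d,2\sqrt d]$ (indeed (EXP) only confines them to $(-(d+1)(1-\beta),(d+1)(1-\beta))$), and for those eigenvalues the natural wave-propagator functional calculus produces non-unimodular multipliers, so the positivity argument breaks exactly where you wave at it. The actual proofs avoid this by working with averages of sphere operators (Chebyshev-like polynomials of $A$) and a differently structured variance inequality, which is why the constant $\beta$ enters as $\beta^{-2}$ and not through a unitary spectral calculus. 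Second, the Egorov estimate with error $\lesssim \alpha_m m\,\|a\|_\infty^2 d^{\Theta(|s|)}$ and the contraction $\|\widehat B^k u\|\lesssim \mathrm{poly}(k)\rho^k\|u\|$ for mean-zero $u$ (with $1-\rho\gtrsim\beta$, despite $\widehat B$ being non-normal) are precisely the theorems one would need to prove; labeling them ``part of the framework'' leaves the proposal as a correct roadmap rather than a proof. If your goal is only to justify quoting the theorem, the citation suffices; if it is to supply a proof, both of these gaps must be filled.
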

The above may be viewed as a discrete analogue of 
the quantum ergodicity theorem of Shnirelman, Zelditch, and Colin de Verdi\'ere \cite{shnirel1974ergodic, de1985ergodicite, zelditch1987uniform}, which 
states that if the geodesic flow on a compact manifold is ergodic, then it must have a dense
subsequence of Laplacian eigenfunctions whose mass distribution converges weakly
to the volume measure as the energy goes to infinity. 
In Theorem \ref{thm:ananth}, the manifold has been replaced by a { sequence}
of graphs, the condition of ergodic geodesic flow has been replaced by { BST
and EXP}, and the notion of weak convergence involves a { sequence} of test
functions on the graphs rather than a single test function on the manifold.

An even  stronger notion of delocalization for the Laplacian on a manifold is
Quantum Unique Ergodicity (QUE) (see e.g. \cite{sarnaksurvey} for a detailed
discussion), where instead of a dense subsequence of eigenfunctions, one
requires that {\em every} subsequence of eigenfunctions becomes
equidistributed. 
%In this regard, the so called QUE conjecture by Rudnick and Sarnak [RS94] has been a major driving force. The largely open conjecture states that on any compact negatively curved manifold all high energy eigenfunctions of the Laplacian equi-distribute. One of the notable mathematical achievements in recent times has been the verification of the conjecture for the Hecke orthonormal basis on an arithmetic surface by Lindenstrauss [Lin06]. 
It is not completely clear what the correct analogous notion should be for a
sequence of finite graphs.  There are various proposals; one definition which
appears in Anantaraman's ICM survey \cite{anantharaman2018delocalization} and in \cite[Question 1.3]{le2017quantum} in the context of sequences of
manifolds is: for every sequence of test functions $a_m$ as in Theorem
\ref{thm:ananth}, and {\em every} sequence of eigenfunctions
$\psi_{i_m}^{(m)}$, one has
\begin{equation}\label{eqn:quedef}
	|\langle \psi_{i_m}^{(m)}, a_m \psi_{i_m}^{(m)}\rangle| \longrightarrow 0.
\end{equation}

Since the graphs constructed in Theorem \ref{thm:totalloc} satisfy BST and EXP,
the theorem shows that these properties cannot imply unique ergodicity in the
above sense: take the $\psi_{i_m}^{(m)}$ to be the localized eigenvectors of
 $G_m$, and let $a_m$ be the indicator functions of the sets $S_m$ on which
they are localized, translated by a constant to have mean zero. It is then
immediate that $\langle \psi_{i_m}^{(m)}, a_m
\psi_{i_m}^{(m)}\rangle=1-o(1)$ for
the entire sequence.

The presence of localized eigenvectors is sometimes referred to as ``scarring''
(see e.g. \cite{anantharaman2018delocalization, hassell}), which may be partial or complete depending
on whether a large fraction or all of the mass is localized on a small set.
Theorem \ref{thm:totalloc} and Remark \ref{partial} may be
interpreted as saying that scarring can occur even under strong expansion and
girth assumptions.

\begin{remark}[QE over intervals] The works \cite{anantharaman, brooks2015} also
	study a more refined version of quantum ergodicity on graphs, where the
	average \eqref{eqn:que} is taken over a spectral window $I\subset
	(-2\sqrt{d},2\sqrt{d})$ rather than the entire spectrum. These results
	hold on intervals $I$ of width roughly $1/\log(m)$, and it would be
	interesting to see whether our examples can prove a lower bound on the
	length of the smallest window that is possible. While Theorem \ref{thm:many} does
	produce many localized eigenvectors in a very small window (due to high
	multiplicity), the problem of controlling the other
eigenvectors well
	enough to say that the average in a small window is not equidistributed is not pursued here and remains open.
\end{remark}

\subsection{Techniques and Vertex Expansion}
The starting point of the proofs of Theorems \ref{thm:totalloc} and \ref{thm:many} is a construction in the proof of  \cite[Theorem 1.6]{GS} which has the following ingredients:
\begin{enumerate}
\item (Pairing trees \cite[Lemma 3.4]{GS}): A pair of trees is glued by
	randomly identifying their leaves, ensuring that the final graph has high girth.
\item (Degree-Correction \cite[Lemma 3.5]{GS}): The above gluing yields
	an irregular graph where the identified leaf vertices have degree two.
		Each such vertex is identified
		with a particular vertex of degree $d-1$ in a degree-correcting
		gadget whose remaining vertices have degree $d+1$ thereby
		yielding a $d+1$ regular graph. 
\end{enumerate}
We modify this proof in two ways. First, we replace the random pairing in step
(1) by a more efficient, simpler, and 
deterministic method. Second, in order to obtain the
additional property of expansion, we replace the degree-correcting gadget in
step (2) by a high girth Ramanujan graph \cite{LPS,M}. To analyze the spectrum
of the resulting graph, we must argue that its largest nontrivial eigenvector
cannot have too much mass on the interface between the trees and the Ramanujan
graph --- once this is established, it is easy to analyze the contributions
from the two pieces separately.  We do this by employing a lemma of N. Kahale,
which supplies a way to control the mass of
eigenvectors on certain highly symmetric
sets (such as our interface) by exhibiting certain appropriate 
super-harmonic test
functions, and by a careful construction of such a function.

Kahale's lemma originally appeared in the influential paper \cite{kahale} which
showed that a $(d+1)$-regular graph $G=(V,E)$ with all nontrivial eigenvalues
bounded by $2\sqrt{d}+o_n(1)$ must have linear expansion at least
$(d+1)/2-o_n(1)$, where linear expansion is defined as: $$ \max_{S\subset V,
|S|=\gamma |V|} \frac{N(S)}{|S|},$$ for a small constant $\gamma>0$ 
(in fact,
he showed a more general inequality relating the parameters). As we discuss
in Remark \ref{rem:vertex}, this implies that our examples cannot have
$|\lambda_i|\le 2\sqrt{d}+o_n(1)$ since our gluing procedure produces a set
with significantly smaller linear vertex expansion than $(d+1)/2$.

Note that it is possible to prove Theorem \ref{thm:totalloc}
with a weaker bound of $3\sqrt{d}$ without using Kahale's lemma; however, since
the bound we attain is quite close to optimal and we have not seen this
technique appear in the quantum ergodicity literature, we believe it is
valuable to present it.

\section{Pairing trees}
Our goal is to construct high girth almost-Ramanujan expanders with one
or many localized eigenvectors. The starting point of the construction
is the following lemma, improving the one from \cite{GS} and
simplifying its proof. We refer to a finite tree in which all vertices except the leaves have degree $(d+1)$ and every leaf
is at distance $D$ from the root as a {\em d-ary tree of depth $D$}.

\begin{lemma}[Pairing of Trees]
\label{l11} 
Suppose $T_1$ and $T_2$ are two $d-$ary trees of depth $D$, each with
$n=(d+1)d^{D-1}$ leaf vertices $V_1$ and $V_2$. Then 
there is a bijection $\pi:V_1\rightarrow V_2$
such that the graph obtained from the vertex disjoint union of
$T_1$ and $T_2$
by identifying $v$ and $\pi(v)$ for all $v$
has girth at least 
$$
\lfloor 2 \log_{2d-1} (n-1) \rfloor+2~(~> 2 \log_{2d-1} n).
$$ 
\end{lemma}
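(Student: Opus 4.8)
The plan is to build the bijection $\pi$ greedily, adding the identifications one at a time while maintaining the invariant that no short cycle is created. Think of the process as incrementally building a graph $H_t$ on the vertex set of $T_1\sqcup T_2$: start with $H_0=T_1\sqcup T_2$, and at step $t$ choose an unmatched leaf $u\in V_1$ and an unmatched leaf $w\in V_2$ and glue them, obtaining $H_t$ from $H_{t-1}$ by identifying $u$ with $w$. Each such identification can only create cycles that pass through the newly merged vertex, and any such cycle consists of a path in $H_{t-1}$ from $u$ to $w$ together with the merge; equivalently, a cycle through $u=\pi(u)$ exists iff $u$ and $w$ were already in the same connected component of $H_{t-1}$, or (more relevantly for the length bound) there is a \emph{short} path between them. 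So to guarantee girth $> 2\log_{2d-1}n$ it suffices to ensure that when we match $u$ with $w$, the distance $d_{H_{t-1}}(u,w)$ is at least $g-1$ where $g=\lfloor 2\log_{2d-1}(n-1)\rfloor+2$. Since the newly created cycle through the merged vertex has length $d_{H_{t-1}}(u,w)+1$ (the path plus one more "edge" through the identification — more precisely, the two tree-edges at $u$ and at $w$ now share the endpoint $u=\pi(u)$), one needs the distance to be at least $g-1$; I would track the exact off-by-one carefully but it is not the conceptual difficulty.

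The key step is a \textbf{counting / volume argument} showing that a valid choice of $w$ always exists. Fix the unmatched leaf $u\in V_1$. The "forbidden" leaves of $V_2$ are exactly those within distance $g-2$ of $u$ in $H_{t-1}$. I want to bound the number of such leaves by something strictly less than the number of still-unmatched leaves in $V_2$. Here is where the degree bound $2d-1$ enters: in $H_{t-1}$, every vertex has degree at most $d+1$, but more importantly a leaf that has already been matched has degree exactly $2$ (it is the identification of two depth-$D$ leaves, each contributing one edge) while internal vertices have degree $d+1$; crucially, when exploring a ball in $H_{t-1}$, at a previously-matched leaf the exploration can continue into the \emph{other} tree but only along $2-1=1$ new edge, whereas at internal vertices it branches into $d$ new edges. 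A clean way to organize this: the ball of radius $r$ around $u$ in $H_{t-1}$ is covered by the ball of radius $r$ around $u$ in $T_1$ together with, for each matched leaf $\ell$ reached, a ball in $T_2$ of the appropriate residual radius. Summing the geometric series, the total number of vertices — hence leaves — reachable within distance $g-2$ is at most roughly $(2d-1)^{(g-2)/2}\cdot(\text{const})$, and by the choice $g=\lfloor 2\log_{2d-1}(n-1)\rfloor+2$ this is at most $n-1$. Since $u$ itself eliminates one more option and at most $n-1$ leaves of $V_2$ are forbidden-or-already-used... — actually I need the bookkeeping to show strictly fewer than the number of available leaves of $V_2$ remain forbidden; the $(n-1)$ versus $n$ slack, plus the fact that at the generic step many leaves of $V_2$ are still free, should give the needed room. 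I would phrase this as: as long as the number of forbidden leaves is $<n$, a legal match exists, and iterate; the last few matches may need a slightly more careful argument (or one accepts the floor in the statement exactly because of this boundary effect).

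The \textbf{main obstacle} I anticipate is getting the constant $2d-1$ (rather than, say, $d+1$ or $d$) to come out of the ball-volume estimate \emph{uniformly over the whole process}, i.e. even when many leaves have already been glued and the graph $H_{t-1}$ is already quite connected and "expander-like" near $u$. Early on this is easy since $H_{t-1}$ is almost two disjoint trees; the worry is a later step where the ball around $u$ meets many matched leaves and the exploration effectively alternates between $T_1$-branching (factor $d$) and $T_2$-branching (factor $d$), and one must check the growth rate is governed by $\sqrt{(d)(d)}\cdot(\text{stuff from the degree-2 bottlenecks})\le 2d-1$ per step of the cycle rather than $d^2$. I expect this is exactly why $2d-1$ appears — it is the growth rate of the tree $T_1 \sqcup_\ell T_2$ of the relevant bipartite-like exploration — and making that precise via a careful recursion on ball sizes (or by a generating-function / transfer-matrix bound) is the technical heart. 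A secondary, purely cosmetic obstacle is the exact relationship between $\lfloor 2\log_{2d-1}(n-1)\rfloor+2$ and $2\log_{2d-1}n$, which follows since $2\log_{2d-1}(n-1)\ge 2\log_{2d-1}n - O(1/n)$ and the $+2$ absorbs the floor, but I would state and verify this elementary inequality at the end.
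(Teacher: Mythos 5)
Your ball--volume computation is essentially the right one (it matches the key counting claim in the paper: the number of identified leaves within distance $2k$ of a given leaf is at most $(2d-1)^k$, with the growth rate $2d-1$ coming from the ``up--turn--down'' path decomposition alternating between the two trees). But the overall greedy framework has a genuine gap, and it is not the ``boundary effect for the last few matches'' you wave at --- it is fatal almost immediately. To certify a legal match at step $t$ you need the number of forbidden leaves of $V_2$ (those within distance about $g$ of $u$ in $H_{t-1}$) to be strictly smaller than the number of still-unmatched leaves of $V_2$, which is $n-t+1$. With the target $g=\lfloor 2\log_{2d-1}(n-1)\rfloor+2$, your own volume bound on the forbidden set is about $(2d-1)^{g/2-1}\approx n-1$, so the inequality $(\text{forbidden})<n-t+1$ can only be certified for $t=1$. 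Even if one argues that early on the true ball is much smaller (because few leaves are yet matched), the available pool $n-t+1$ shrinks to $1$ by the end while the forbidden ball does not shrink, so pure greedy can only guarantee cycles of length about $2\log_{2d-1}(n-t)$ at step $t$, and the girth is the \emph{minimum} over all steps --- which tends to $0$. The floor and the $n-1$ versus $n$ slack in the statement do not absorb this.

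This is precisely the difficulty the Erd\H{o}s--Sachs method is designed to circumvent, and it is what the paper does: instead of building $\pi$ incrementally, take a bijection $\pi$ that maximizes the girth $g$ and, among those, minimizes the number of cycles of length exactly $g$; then use the counting claim (on the \emph{final} graph, where every leaf has degree $2$) to find a leaf $y$ at distance $>2k$ from a leaf $x$ on a shortest cycle, and swap the images of $x$ and $y$. One checks that the swap destroys the shortest cycle through $x$ and that every newly created cycle has length at least $\min\{2k+2,\,2g\}$, contradicting extremality unless $g\ge 2k+2$. If you want to salvage your write-up, you should replace the greedy scheme by this extremal-plus-local-swap argument; your counting lemma then slots in essentially unchanged (note also the small off-by-one you flagged: identifying $u$ and $w$ turns a $u$--$w$ path of length $\ell$ into a closed walk of length $\ell$, not $\ell+1$).
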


\begin{proof}
We apply a variant of the method of Erd\H{o}s and Sachs \cite{ES},
(see also \cite{ABGR} for a similar argument).
Let $g$ be the maximum possible girth of a graph obtained as
above, and let $\pi:V_1\rightarrow V_2$ be a bijection for which
the girth is $g$ and the number of cycles of length exactly $g$ is
minimum. Note that $g$ is even, as the graph is bipartite.
Let $G$ be the graph with the identified leaves 
obtained by $\pi$, and let $L$ denote the set of all $n$
vertices of degree $2$ in it, that is, all the identified leaves.
Obviously every cycle of $G$ must contain vertices of $L$. 
Let $x \in L$ be a vertex 
contained in a shortest cycle $C$ of $G$. 
\vspace{0.1cm}

\noindent
{\bf Claim:}\ For every $k \geq 0$ the number 
of vertices $y \in L$ of distance at most
$2k$ from $x$ is at most $(2d-1)^k$.
\vspace{0.1cm}

\noindent
{\bf Proof of claim:}\, Any shortest path of length precisely $2s
\leq 2k$ 
between $x$ and another
vertex $y \in L$ is a concatenation of some number $r$  of paths
$P_1,P_2, \ldots ,P_r$, where $P_i$ is a path from $x_{i-1}$ to
$x_i$ with $x_0,x_1, \ldots ,x_r \in L$, $x_0=x$,  $x_r=y$,
and either all even paths $P_i$ are in $T_1$ and all odd ones are
in $T_2$ or vice versa. Let $2k_i$ be the length 
of $P_i$, then $\sum_{i=1}^r k_i = s \leq k$. Let $m(r,s)$
denote the number of paths as above with these values of $r$ and $s$.
We next show that for all $1 \leq r \leq s$
\begin{equation}
\label{e11}
m(r,s)=2 {{s-1} \choose {r-1}} (d-1)^r d^{s-r}
\end{equation}
The factor $2$ is for deciding if the first path $P_1$ is in 
$T_1$ or in $T_2$. The factor ${{s-1} \choose {r-1}}$ is the number of
ways to choose the subset of $(r-1)$ elements $\{k_1,k_1+k_2,
\ldots k_1+k_2+  \cdots + k_{r-1}\}$ 
in the set $\{1,2, \ldots,s-1\}$.
(This already determines $k_r=s-(k_1+k_2 + \cdots + k_{r-1})$.) Once
these choices are fixed, there is only one way for the edges
numbers $1,2, \ldots ,k_i$ of each path $P_i$, given the previous
paths, as these edges go up
the tree. There are $d-1$ possibilities for the edge number $k_{i+1}$ of this
path, and there are $d^{k_i-1}$ choices for the remaining edges of
$P_i$. The product of all these terms gives the expression in
(\ref{e11}) for $m(r,s)$. For each fixed $s$, summing 
over all $1 \leq r \leq s$ we conclude that the number $m(s)$ of 
paths of length exactly $2s$ starting in $x$ is
$$
m(s) =\sum_{r=1}^s 2 {{s-1} \choose {r-1}} (d-1)^r d^{s-r}
=2(d-1) \sum_{j=0}^{s-1} {{s-1} \choose j} (d-1)^j d^{s-1-j} 
=2(d-1) (2d-1)^{s-1}.
$$
Adding the trivial path of length $0$ from $x$ to itself and summing
over all $s$ from $1$ to $k$ we conclude that
the total number of paths as above of length at most $2k$ starting
at $x$ is  
$$
1+\sum_{s=1}^k 2(d-1) (2d-1)^{s-1} =1+2(d-1)\frac{(2d-1)^k-1}{2d-2}
=(2d-1)^k.
$$
The number above provides an upper bound for the number of vertices $y
\in L$ that lie within distance $2k$ of $x$ (which may be smaller as
several paths may lead to the same vertex). This completes the proof
of the claim.

Returning to the proof of the lemma, define
$k = \lfloor \log_{2d-1} (n-1) \rfloor$. Then $(2d-1)^k <n$ and hence
there is a vertex $y \in L$ whose distance from $x$ in $G$ is larger
than $2k$ (and hence at least $2k+2$).  Let $u$ be the unique parent
of $x$ in $T_1$ and let $u'$ be the unique parent of $x$ in $T_2$.
Similarly, let $v$ be the unique parent of $y$ in $T_1$ and let $v'$
be the unique parent of $y$ in $T_2$. Change the bijection 
$\pi$ to the bijection $\pi'$ obtained by swapping the images of
$x$ and $y$ to get a graph $G'$ obtained from $G$ by removing the
edges $xu$ and $yv$ and by adding the edges $xv$ and $yu$. 
This swapping removes the shortest cycle $C$ of length $g$ in $G$
that contains $x$,
and is not contained in 
$G'$. Every new cycle contained in $G'$ and not in $G$ must include
at least one of the new edges $xv$, $yu$. If it contains exactly one
of them, say $xv$, then it must also contain a path in $G$ from $x$ to
$v$. The length of such a path is at least $2k+1$ (as the
distance in $G$ from $x$ to $y$ is at least $2k+2$) showing that in
this case the length of the new cycle is at least $2k+2$. If it
contains both new edges $xv$ and $yu$ it must also contain either a
path in $G$ from $x$ to $y$ (of length at least $2k+2)$) or a path in $G$ 
from $x$ to $u$ (of length at least $g-1$) and a path in $G$ from
$y$ to $v$ (of length at least $g-1$). Therefore, in this case the
length of the new cycle is either at least $2k+2+2=2k+4$ (in fact
larger) or at least
$2g-2+2=2g>g$.  It follows that the only possibility to obtain a new
cycle of length at most $g$ is if $g \geq 2k+2$. If the girth $g$ is
smaller than $G'$ has girth at least $g$ and the number of its cycles
of length $g$ is smaller than that number in $G$, contradicting the
choice of $G$. This shows that the girth $g$ satisfies
$g \geq 2k+2= 2 \lfloor \log_{2d-1} (n-1) \rfloor+2$, completing the
proof of the lemma.
\end{proof}

\begin{remark}
{For large fixed $d,$ the above graph has girth close to
$2 \log_d N$, where $N$ is the number of its vertices. This is 
strictly larger than the highest known girth of an
$N$ vertex $(d+1)$-regular graph, which is roughly
$\frac{4}{3} \log_d N$ (for some values of $d$). However,
many of the vertices of our graph here
have degree $2$, and suppressing them will not give graphs
of girth larger than $(1+o(1)) \log_d N$.}
\end{remark}
\section{The construction}\label{construction}

Let $d$ a prime and $\alpha\in (0,1/6)$ be given. 
Let $H=(V,E)$ be a $(d+1)$-regular non-bipartite 
Ramanujan graph with $m$ vertices and girth larger than $2/3\log_d(m)$; by
\cite{LPS} such graphs exist for infinitely many $m$. Set $r=\lfloor \alpha \log_d(m)\rfloor$ and note that
\begin{equation}\label{eqn:girthm}
 \frac{2}{3} \log_d m   \geq  4r.
\end{equation}

Fix a vertex $u$ of $H$. The induced
subgraph on all vertices of distance at most $r$ from $u$ 
is a tree $T_1$ rooted at $u$.
{Let $n$ be the number of its leaves, let the set of
leaves be $L_1=\{u_1,..,u_n\}$ and let $V_1$ denote the set of 
all non-leaves of $T_1$. Take a matching from the set $L_1$  
to the set of vertices $L_2=\{v_1,..,v_n\}$,
all at distance exactly $r+1$ from $u$, 
and remove the matching $u_iv_i$. Note that
all $u_i$ are far from each other in the graph $H-V_1$
since the girth is
significantly larger than $2r$. Similarly, all
vertices $v_i$ are far from each other in $H-V_1$ 
for the same reason.  Now take another $d$-tree $T_2$ isomorphic
to $T_1$ on new vertices, and let $u'$ denote its root.
Identify the leaves of $T_1$ with these of $T_2$
using Lemma \ref{l11}. Let $V_2$ denote the set of all non-leaves
of $T_2$.
As the vertices 
$u_i$ are far from each other in $H-V_1$ the
girth stays as large as guaranteed by the lemma. Finally add
a
third tree $T_3$ with the same parameters on new vertices, rooted
at $v'$, 
identify its leaves with the vertices $v_i$ and let
$V_3$ denote the set of its non-leaves. Call the resulting
graph $G$. }

The next sequence of lemmas will be needed to show that $G$ satisfies the claims of Theorem \ref{thm:totalloc}.

\begin{lemma}
\label{girth} The girth of $G$ is at least 
$$2\log_{2d-1}((d+1) d^{r-1}) ~(\geq 2 \alpha \log_{2d-1} (m)
-O(1)).
$$
\end{lemma}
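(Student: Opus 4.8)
The plan is to realise $G$ as a union of two edge-disjoint subgraphs glued along the leaf set $L_1$: the paired-trees graph $G_1:=T_1\cup_\pi T_2$ supplied by Lemma~\ref{l11}, and the graph $G_2:=(H-V_1-\{u_iv_i\})\cup T_3$ obtained from $H$ by deleting the interior vertices $V_1$ of $T_1$ together with the matching and then attaching $T_3$ at the vertices $v_i$. First I would note that since $\girth(H)>4r$ the ball of radius $r$ about $u$ is a tree, so every edge of $T_1$ has an endpoint in $V_1$; hence deleting $V_1$ destroys every edge of $T_1$ and no other edge of $H$, and one checks $E(G)=E(G_1)\sqcup E(G_2)$ while $V(G_1)\cap V(G_2)=L_1$. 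Therefore every cycle $C$ of $G$ is of one of three kinds: contained in $G_1$; contained in $G_2$; or \emph{mixed}, meaning it uses an edge of each, in which last case $C$ has a maximal sub-arc $Q$ lying entirely in $G_2$ whose two endpoints are distinct and lie in $L_1$.

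If $C\subseteq G_1$ then $|C|>2\log_{2d-1}((d+1)d^{r-1})$ is precisely the conclusion of Lemma~\ref{l11}. In the other two cases I would settle for the cruder bound $|C|\ge 2r$, which is enough because $(2d-1)^r\ge(d+1)d^{r-1}$ and hence $2r\ge 2\log_{2d-1}((d+1)d^{r-1})$. For a mixed cycle this reduces to the claim that \emph{every path in $G_2$ joining two distinct leaves $u_i,u_j\in L_1$ has length at least $2r$}; for a cycle inside $G_2$ one needs the same claim with distinct $v_i,v_j\in L_2$ (the subcase where the cycle avoids $T_3$ being settled directly by $\girth(H)>4r$).

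The two geometric ingredients are: (i) any two distinct $u_i,u_j$ are at distance $>2r$ in $H-V_1$, seen by pasting a shortest $(H-V_1)$-path between them onto the ($\le 2r$-long) $T_1$-tree-path between them --- these are internally disjoint, since the tree-path passes only through $V_1$, so their union is a cycle of $H$ and the total length exceeds $\girth(H)>4r$; and (ii) the radius-$2r$ ball about $u$ is a tree, in which deleting the matching isolates each $v_i$ (with its whole subtree) from the single piece that contains $u$ and all of $L_1$, so any path within $H-\{u_iv_i\}$ that joins some $u_j$ to some $v_c$, or that joins two distinct $v_i,v_j$, must leave this ball and hence has length $\ge 2r$. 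With these in hand: for a mixed cycle with maximal $G_2$-arc $Q$ from $u_i$ to $u_j$, either $Q$ avoids $T_3$, so $Q\subseteq H-V_1$ and $|Q|>2r$ by (i), or $Q$ reaches $L_2$, say first at $v_c$, and then the segment of $Q$ from $u_i$ to $v_c$ lies in $H-V_1$ and already has length $\ge 2r$ by (ii). A cycle $C\subseteq G_2$ that uses an edge of $T_3$ likewise has a maximal arc inside $H-V_1$ joining distinct $v_i,v_j$, of length $\ge 2r$ by (ii).

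I expect the mixed-cycle bookkeeping to be the only real obstacle: the subgraph $G_1$ contributes nothing past Lemma~\ref{l11}, and the whole difficulty is controlling an arc that leaves $G_1$, dips into $H$ and perhaps into $T_3$, and returns --- which is exactly where ingredient (ii), the fact that the radius-$2r$ ball of $H$ is a tree that the matching disconnects, does the work. Finally, the advertised estimate $\girth(G)\ge 2\alpha\log_{2d-1}(m)-O(1)$ follows from $(d+1)d^{r-1}\ge d^{r}\ge m^{\alpha}/d$ and $r=\lfloor\alpha\log_d m\rfloor$.
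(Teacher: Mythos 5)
Your argument is correct and is exactly the (implicit) argument of the paper: the paper's proof of this lemma is a one-line appeal to the construction, Lemma~\ref{l11}, and the bound $\tfrac{2}{3}\log_d m > 4r$, and your case analysis (cycles inside $T_1\cup_\pi T_2$ handled by Lemma~\ref{l11}; all other cycles forced to have length at least $2r \ge 2\log_{2d-1}((d+1)d^{r-1})$ because the $u_i$, and likewise the $v_i$, are far apart in $H-V_1$ after the matching is deleted) is precisely the bookkeeping the authors leave to the reader. The only point worth flagging is that your ingredient (ii) needs the radius-$2r$ ball of $H$ to be a tree, which requires $\girth(H)>4r+1$ rather than just $>4r$; this holds here since $\alpha<1/6$ makes $\tfrac{2}{3}\log_d m - 4r$ grow with $m$.
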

\begin{proof}Follows from the above definition, 
Lemma \ref{l11} and that  $\frac{2}{3} \log_d m  > 4r$. 
\end{proof}

We now discuss the eigenvectors of $G$. We begin by recording some facts about eigenvalues and eigenvectors of  {rooted} $d-$ary
trees which also appear in \cite{GS} and will be critical to our construction. Recall that the eigenvalues of a $d-$ary tree are contained in the
interval $(-2\sqrt{d},2\sqrt{d})$ \cite[Section 5]{hoory}. For our purposes we will
only consider eigenvalues corresponding to eigenvectors which are {\em radial}, which means that they assign the same value to
vertices in a given level. We will refer to such eigenvalues as {\em radial eigenvalues}.

\begin{lemma}(Radial Eigenvalues)\cite[Lemma 3.1]{GS}\label{sym1} For any positive integer $D\ge 2,$ $A_D$ the adjacency matrix of $T_D,$ a $d-$ary tree of depth $D,$ has exactly $D+1$ radial eigenvalues counting multiplicities. 
\end{lemma}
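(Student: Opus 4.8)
The plan is to exhibit the $(D+1)$-dimensional subspace $R\subset\R^{V(T_D)}$ of radial vectors --- those that are constant on each level $0,1,\dots,D$ --- to show that $R$ is invariant under $A_D$, and to conclude that the spectrum of the restriction $A_D|_R$ accounts for \emph{exactly} $D+1$ eigenvalues counted with multiplicity, each of which is by definition a radial eigenvalue.

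First I would check the invariance by a short case analysis. Writing a radial vector as $v=\sum_{k=0}^D a_k\mathbf{1}_{L_k}$, where $L_k$ is the set of vertices at distance $k$ from the root, one computes $(A_Dv)_x$ according to the location of $x$: at the root, $(A_Dv)_x=(d+1)a_1$; at an internal vertex of level $1\le k\le D-1$ (one parent and $d$ children), $(A_Dv)_x=a_{k-1}+d\,a_{k+1}$; and at a leaf, $(A_Dv)_x=a_{D-1}$. In every case the value depends only on the level of $x$, so $A_Dv\in R$. Hence $R$ is $A_D$-invariant, so $A_D$ restricted to $R$ decomposes into eigenspaces and contributes precisely $\dim R=D+1$ eigenvalues counted with multiplicity, each having a radial eigenvector; conversely any radial eigenvector lies in $R$, so these are exactly the radial eigenvalues (here I read the multiplicity of a radial eigenvalue $\lambda$ as $\dim(R\cap\ker(A_D-\lambda\mathrm{I}))$, which is what makes the sum equal $\dim R$).

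To pin the count down --- and in fact to extract slightly more, namely that the $D+1$ radial eigenvalues are pairwise distinct --- I would pass to the orthonormal basis $\widehat e_k=|L_k|^{-1/2}\mathbf{1}_{L_k}$ of $R$. Since $A_D$ is symmetric, its matrix $M$ in this basis is symmetric, and the computation above shows that $M$ is tridiagonal. Its $(k,k+1)$ off-diagonal entry equals $|L_k|^{-1/2}|L_{k+1}|^{-1/2}$ times the number of edges between levels $k$ and $k+1$, which is nonzero for every $0\le k\le D-1$; thus $M$ is a Jacobi matrix, and such matrices have $D+1$ distinct real eigenvalues. (Alternatively one can skip the normalization and note that the un-normalized transfer matrix is tridiagonal with each product of opposite off-diagonal entries equal to $d$ or $d+1$, hence positive, so it is diagonally similar to a Jacobi matrix and the same conclusion holds; this also makes the resulting interlacing/explicitness available for later use.)

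There is no real obstacle here: the only delicate points are bookkeeping, namely the root/internal/leaf case split that gives $A_D$-invariance of $R$, and the convention that "counting multiplicities" means summing $\dim(R\cap\ker(A_D-\lambda\mathrm{I}))$, which is forced to equal $\dim R=D+1$ by invariance. I would also note that the argument identifies the radial eigenvalues concretely as the spectrum of an explicit $(D+1)\times(D+1)$ tridiagonal matrix, which is the form in which this fact will presumably be used to locate the eigenvalues of $T_D$ inside $(-2\sqrt d,2\sqrt d)$ and to construct the localized eigenvectors of $G$.
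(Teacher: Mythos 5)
Your argument is correct and is essentially the standard proof: the paper itself does not prove this lemma but cites \cite[Lemma 3.1]{GS}, whose argument is the same invariant-subspace computation, namely that the $(D+1)$-dimensional space of radial vectors is $A_D$-invariant and that the restriction is (similar to) a Jacobi matrix, hence has $D+1$ simple eigenvalues. Your explicit remark on how ``counting multiplicities'' is to be read (as multiplicity within the radial subspace) and the observation that the tridiagonal structure forces these eigenvalues to be distinct are both accurate and consistent with how the lemma is used later.
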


\begin{lemma}(Eigenvalues of $d-$ary Trees)\cite[Lemma 3.2]{GS}\label{lem:dense} The set of all  {radial eigenvalues}
	 of any infinite sequence of distinct finite $d-$ary trees is dense in the interval
	$(-2\sqrt{d},2\sqrt{d})$.
\end{lemma}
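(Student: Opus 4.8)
\noindent\emph{Proof plan for Lemma~\ref{lem:dense}.} The plan is to reduce the radial eigenvalue problem on a $d$-ary tree to a transcendental equation in one variable and then to locate its roots by a sign-change argument. First I would write down the radial eigenvalue equation: restricting the adjacency operator of the $d$-ary tree $T_D$ of depth $D$ to functions $(f_0,\dots,f_D)$ that are constant on each level (level $0$ being the root), the equation $Af=\lambda f$ becomes the three-term recurrence $f_{i-1}+d\,f_{i+1}=\lambda f_i$ for $1\le i\le D-1$, together with the root equation $(d+1)f_1=\lambda f_0$ and the leaf equation $f_{D-1}=\lambda f_D$. The substitution $f_i=d^{-i/2}g_i$ turns the interior recurrence into the Chebyshev-type recurrence $g_{i+1}=\mu g_i-g_{i-1}$ with $\mu:=\lambda/\sqrt d$, turns the root equation into $g_1=\tfrac{d}{d+1}\,\mu\,g_0$, and turns the leaf equation into $g_{D+1}=0$ once we extend the sequence by the recurrence. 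A radial eigenvector cannot vanish at the root (else the recurrence forces it to vanish everywhere), so we may normalize $g_0=1$; then $g_1,\dots,g_{D+1}$ are determined functions of $\mu$, and $\lambda=\sqrt d\,\mu$ is a radial eigenvalue of $T_D$ precisely when $g_{D+1}(\mu)=0$.

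Next I would solve the recurrence explicitly in the only relevant regime $\mu\in(-2,2)$ (recall all eigenvalues of $T_D$ lie in $(-2\sqrt d,2\sqrt d)$). Writing $\mu=2\cos\theta$ with $\theta\in(0,\pi)$, the solution with $g_0=1$ is $g_i=\cos(i\theta)+\tfrac{d-1}{d+1}\cot\theta\,\sin(i\theta)$, and a one-line product-to-sum manipulation yields
\[
(d+1)\,\sin\theta\cdot g_{D+1}(\theta)\;=\;d\,\sin\!\big((D+2)\theta\big)-\sin(D\theta).
\]
Hence, for $\theta\in(0,\pi)$, the number $\lambda=2\sqrt d\cos\theta$ is a radial eigenvalue of $T_D$ if and only if $d\,\sin((D+2)\theta)=\sin(D\theta)$.

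It then remains to show the roots of $G_D(\theta):=d\sin((D+2)\theta)-\sin(D\theta)$ on $(0,\pi)$ become dense as $D\to\infty$. Here I would use $d\ge2$: at the $D+2$ points $\theta_m:=\tfrac{(2m+1)\pi}{2(D+2)}$, $m=0,\dots,D+1$, all lying in $(0,\pi)$, we have $|\sin((D+2)\theta_m)|=1$ while $|\sin(D\theta_m)|\le1<d$, so $\operatorname{sgn} G_D(\theta_m)=\operatorname{sgn}\sin((D+2)\theta_m)=(-1)^m$. Thus $G_D$ changes sign across each of the $D+1$ intervals $(\theta_m,\theta_{m+1})$ and so has a root in each; since consecutive $\theta_m$ are $\pi/(D+2)$ apart and both $\theta_0$ and $\pi-\theta_{D+1}$ equal $\pi/(2(D+2))$, the roots are $O(1/D)$-dense in $(0,\pi)$ (and there are exactly $D+1$ of them, one per interval, matching Lemma~\ref{sym1}). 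Since $\theta\mapsto 2\sqrt d\cos\theta$ is a uniformly continuous bijection $[0,\pi]\to[-2\sqrt d,2\sqrt d]$, the radial eigenvalues of $T_D$ are $o_D(1)$-dense in $(-2\sqrt d,2\sqrt d)$; and because an infinite sequence of distinct $d$-ary trees has unbounded depths, the union of their radial eigenvalue sets is dense in $(-2\sqrt d,2\sqrt d)$.

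The calculation is elementary, so the main thing to get right is bookkeeping (the boundary conditions and the trigonometric identity). The one conceptual point is that density must extend all the way to the spectral edge $\theta\to0,\pi$ (equivalently $\lambda\to\pm 2\sqrt d$): one might worry the ``non-generic'' root condition $g_1=\tfrac{d}{d+1}\mu g_0$ distorts the spectrum there, but the explicit equation shows it does not, and the equally spaced sign-change points $\theta_m$ already reach within $O(1/D)$ of $0$ and $\pi$. (Conceptually, the radial part of $A_{T_D}$ is a Jacobi matrix differing in one entry from the free one with off-diagonals $\sqrt d$; its truncations converge strongly to a half-line Jacobi operator of spectrum $[-2\sqrt d,2\sqrt d]$, which alone forces the truncation spectra to fill that interval --- the explicit equation just makes this quantitative.)
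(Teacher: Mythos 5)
Your argument is correct and complete: the reduction to the three-term recurrence with the boundary conditions $(d+1)f_1=\lambda f_0$ and $f_{D-1}=\lambda f_D$, the normalization $g_i=d^{i/2}f_i$, the identity $(d+1)\sin\theta\,g_{D+1}=d\sin((D+2)\theta)-\sin(D\theta)$, and the sign changes at $\theta_m=\tfrac{(2m+1)\pi}{2(D+2)}$ all check out, and the root count $D+1$ is consistent with Lemma~\ref{sym1}. The paper itself only cites \cite[Lemma 3.2]{GS} for this statement, and your proof follows essentially the same route as that reference (the $\lambda=2\sqrt d\cos\theta$ parametrization of radial eigendata, as also used in Lemma~\ref{lem:eigvec}), so there is nothing further to reconcile.
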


\begin{lemma}[Eigenvectors of $d-$ary Trees]\cite[Lemma 3.3]{GS}\label{lem:eigvec} Assume $d\ge 2$
	and let $T$ be a $d-$ary tree of depth $D$ with root $r$. Let $S_0=\{r\},S_1,\ldots,S_D\subset
	T$ be the vertices at levels $0,1,\ldots, D$ of the tree and let $v$ be a
	radial eigenvector of its adjacency matrix with eigenvalue 
	$\lambda=2\sqrt{d}\cos\theta\in (-2\sqrt{d},2\sqrt{d})$. Then every
	pair of adjacent levels has approximately the same total $\ell_2^2$ mass as the root:
	$$\Omega(\sin^2\theta) = \frac{\|v_{S_i}\|_2^2+\|v_{S_{i+1}}\|_2^2}{\|v(r)\|_2^2}=O(1/\sin^{2}\theta).$$
\end{lemma}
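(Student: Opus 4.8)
\emph{Proof strategy.} The plan is to use the radial symmetry to collapse $Av=\lambda v$ to a scalar three-term recurrence, recognize the resulting sequence as a discrete sinusoid carrying a conserved ``energy,'' and read off the two-sided bound from that energy. Concretely, write $n_i:=|S_i|$, so $n_0=1$ and $n_i=(d+1)d^{i-1}$ for $i\ge 1$, and let $a_i$ be the common value of $v$ on level $i$. Since the root and every internal vertex have degree $d+1$, the eigenvalue equation reads $(d+1)a_1=\lambda a_0$ at the root, $a_{i-1}+d\,a_{i+1}=\lambda a_i$ for $1\le i\le D-1$, and $a_{D-1}=\lambda a_D$ at a leaf. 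I would then pass to $b_i:=\sqrt{n_i}\,a_i$; the purpose of this substitution is that $b_i^2=n_ia_i^2=\|v_{S_i}\|_2^2$ is exactly the mass on level $i$, and $b_0^2=v(r)^2$. A short computation turns the equations above into $\sqrt{d+1}\,b_1=\lambda b_0$ (root), the clean recurrence $b_{i-1}+b_{i+1}=(\lambda/\sqrt d)\,b_i=2\cos\theta\,b_i$ for $2\le i\le D-1$, and, at level $1$, $\sqrt{(d+1)/d}\,b_0+b_2=2\cos\theta\,b_1$ (which is slightly off the clean pattern, precisely because the root is irregular relative to the internal vertices).

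The key point is a conserved quantity: for $(b_j)$ satisfying the clean recurrence, $E:=b_i^2+b_{i+1}^2-2\cos\theta\,b_ib_{i+1}$ is independent of $i$ on $1\le i\le D-1$, since $E_i-E_{i-1}=(b_{i+1}-b_{i-1})(b_{i+1}+b_{i-1}-2\cos\theta\,b_i)=0$ whenever the recurrence holds at $i$ (equivalently $b_i=\rho\cos(i\theta+\phi)$ for $i\ge 1$, with $E=\rho^2\sin^2\theta$). Because $\lambda\in(-2\sqrt d,2\sqrt d)$ forces $\sin\theta>0$, combining conservation of $E$ with $2|b_ib_{i+1}|\le b_i^2+b_{i+1}^2$ sandwiches the level masses: $(1-|\cos\theta|)(b_i^2+b_{i+1}^2)\le E\le(1+|\cos\theta|)(b_i^2+b_{i+1}^2)$, and since $\tfrac12\sin^2\theta\le 1-|\cos\theta|$ and $1+|\cos\theta|\le 2$ this gives $\tfrac12E\le b_i^2+b_{i+1}^2\le 2E/\sin^2\theta$ for every $1\le i\le D-1$. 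So the lemma reduces to showing $E$ is comparable to $b_0^2$ up to bounded and $\sin^{-2}\theta$ factors, together with handling the single remaining pair $(S_0,S_1)$.

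For that reduction, the root and level-$1$ equations pin down $b_1$ and $b_2$ in terms of $b_0$: $b_1=c_1b_0$, $b_2=c_2b_0$ with $c_1=\tfrac{2\sqrt d\cos\theta}{\sqrt{d+1}}$ and $c_2=\tfrac{4d\cos^2\theta-(d+1)}{\sqrt{d(d+1)}}$, whence $E=(c_1^2+c_2^2-2\cos\theta\,c_1c_2)\,b_0^2$. One checks $c_1^2+c_2^2=O(1)$ at once, and that $c_1^2+c_2^2$ is bounded \emph{below} by an absolute constant: writing $x=4d\cos^2\theta$ one has $c_1^2+c_2^2=\tfrac{x^2-(d+2)x+(d+1)^2}{d(d+1)}$, and the numerator has minimum value $\tfrac{d(3d+4)}{4}$, so $c_1^2+c_2^2\ge\tfrac{3d+4}{4(d+1)}>\tfrac34$. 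Consequently $\tfrac38\sin^2\theta\,b_0^2\le(1-|\cos\theta|)(c_1^2+c_2^2)b_0^2\le E=O(b_0^2)$, and also $b_0^2\le b_0^2+b_1^2=(1+c_1^2)b_0^2=O(b_0^2)$. Feeding these into the sandwich above yields $\Omega(\sin^2\theta)\,b_0^2\le\|v_{S_i}\|_2^2+\|v_{S_{i+1}}\|_2^2\le O(\sin^{-2}\theta)\,b_0^2$ for all $0\le i\le D-1$, as claimed (the degenerate depths $D\le 2$, where the internal recurrence is vacuous, are checked directly).

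The only genuinely delicate step is this last reduction, and within it the lower bound $c_1^2+c_2^2=\Omega(1)$: because the root's neighborhood is irregular, $b_0$ is not simply another sample of the sinusoid, so one has to rule out the two boundary relations at the top of the tree conspiring to make $b_1$ and $b_2$ simultaneously tiny compared with $b_0$. Everything else --- the symmetrizing substitution, the conserved energy, and the arithmetic--geometric sandwich --- is routine.
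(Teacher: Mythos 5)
This lemma is quoted from \cite{GS} and not reproved in the paper, so there is no in-text proof to compare against; your argument is correct and complete, and it is essentially the standard one (and the one in \cite{GS}): symmetrize to the weighted level-values $b_i=\sqrt{|S_i|}\,a_i$, obtain the three-term recurrence $b_{i-1}+b_{i+1}=2\cos\theta\,b_i$ equivalent to $b_i=\rho\cos(i\theta+\phi)$, and control all levels via the conserved quantity $b_i^2+b_{i+1}^2-2\cos\theta\,b_ib_{i+1}=\rho^2\sin^2\theta$. Your explicit lower bound $c_1^2+c_2^2\ge\frac{3d+4}{4(d+1)}>\frac34$, handling the irregularity at the root, checks out, and your constants are absolute rather than $d$-dependent.
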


Given the above we have the following lemma about how radial tree eigenvectors can be used to construct eigenvectors of $G.$

\begin{lemma}\label{global} For any radial eigenvalue $\lambda$ of the  adjacency matrix of a tree of depth $r-1$, there exists an eigenvector $\nu$ supported on $V_1\cup V_2$ such that $$A_G\nu=\lambda \nu.$$ 
\end{lemma}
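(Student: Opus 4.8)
The plan is to write $\nu$ down explicitly from a radial eigenvector of the depth-$(r-1)$ tree, using a sign flip between the two tree-copies so that the eigenvalue equation holds automatically along the interface. The key structural point is that, viewed as an induced subgraph of $G$, the set $V_1$ of non-leaves of $T_1$ is itself a $d$-ary tree of depth $r-1$ rooted at $u$: the vertices of $V_1$ at distance $r-1$ from $u$ are its leaves, since inside $V_1$ their only neighbour is their parent --- their $d$ children lie in $L_1$, which is disjoint from $V_1\cup V_2$. The same holds for $V_2$ rooted at $u'$, so $V_1$ and $V_2$ are two vertex-disjoint copies of the depth-$(r-1)$ $d$-ary tree $T'$ whose radial eigenvalue $\lambda$ we are given. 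I would fix a radial eigenvector $\phi$ of $T'$ and write $\phi_j$ for its common value at level $j$.

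Next I would define $\nu$ by letting it equal $\phi$ on $V_1$ and $-\phi$ on $V_2$ (via the natural identifications with $T'$; radiality of $\phi$ makes this unambiguous), and $0$ on every other vertex of $G$; then $\nu\neq 0$ since $\phi\neq 0$. Verifying $A_G\nu=\lambda\nu$ splits into four cases. (i) If $w$ is a non-leaf of $V_1$ (its root included), all $G$-neighbours of $w$ lie in $V_1$ and coincide with its $T'$-neighbours, so $(A_G\nu)_w$ is exactly $(A_{T'}\phi)$ at the corresponding vertex, i.e.\ $\lambda\nu_w$; likewise for $V_2$. (ii) If $w$ is a leaf of $V_1$ (distance $r-1$ from $u$), its $G$-neighbours are its parent in $V_1$ together with its $d$ children in $L_1$, where $\nu=0$, so $(A_G\nu)_w=\phi_{r-2}$, which equals $\lambda\phi_{r-1}=\lambda\nu_w$ by the leaf equation of $T'$; likewise for $V_2$. (iii) If $w=u_i\in L_1$, then by construction its $G$-neighbours are its $T_1$-parent (a leaf of $V_1$, with $\nu$-value $\phi_{r-1}$), its $T_2$-parent (a leaf of $V_2$, with $\nu$-value $-\phi_{r-1}$), and the $d-1$ children of $u_i$ in $H$ left intact by the matching, which sit at distance $r+1$ from $u$ and carry $\nu=0$; hence $(A_G\nu)_{u_i}=\phi_{r-1}-\phi_{r-1}+0=0=\lambda\nu_{u_i}$. (iv) For every remaining vertex of $G$ (the rest of $H$ and all of $V_3$), $\nu$ vanishes both at $w$ and at all of its neighbours, and the equation reads $0=0$.

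The one step that requires genuine care is the local picture invoked in (ii)--(iii), and this is exactly where the hypotheses of the construction enter: since $\girth(H)>2r$, the radius-$r$ ball of $u$ really is the tree $T_1$; the matching removes exactly one child-edge from each $u_i$ (and the unique upward edge from each $v_i$); and neither the gluing of the leaves of $T_1$ to those of $T_2$ nor the gluing of the $v_i$ to the leaves of $T_3$ creates an edge between $V_1\cup V_2$ and its complement. Together these facts show that the only $G$-neighbours of $V_1\cup V_2$ lying outside it are in $L_1$, that each $u_i$ has exactly one neighbour in $V_1$ and one in $V_2$ (both at the leaf level), and that $\nu$ vanishes on the whole complement --- which is what makes the cancellation in (iii) work. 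Granting these local facts, the rest of the verification is bookkeeping, and since $\phi$ is a nonzero radial eigenvector of the depth-$(r-1)$ tree we obtain a nonzero eigenvector $\nu$ of $A_G$ with eigenvalue $\lambda$ supported on $V_1\cup V_2$, as claimed.
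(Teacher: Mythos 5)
Your proposal is correct and follows essentially the same route as the paper: the same antisymmetric extension $\nu=(\phi,-\phi,0)$ of a radial eigenvector of the depth-$(r-1)$ tree, with the eigenvalue equation verified on $V_1\cup V_2$ (where the only new neighbours carry value $0$) and at the glued vertices $u_i$ (where the two parent contributions cancel). Your write-up is simply a more explicit, case-by-case version of the paper's argument, including the correct accounting of the $d-1$ surviving $H$-edges at each $u_i$.
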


\begin{proof} For completeness we include the arguments that essentially appear in \cite[Proof of Theorem 1.6]{GS}. Consider any such eigenvalue $\lambda$ and its corresponding radial eigenvector $f$. Now construct the function $\nu$ that equals $f$ on the top $r-1$ levels of $T_1$, i.e., $V_1$ and correspondingly $-f$ on $V_2$, and is zero elsewhere. 
 We claim that $\nu$ is an eigenvector of $G$ with eigenvalue $\lambda$.  To see this, note that the eigenvector equation is trivially satisfied on $V_1$ and $V_2$ because all new neighbors of those vertices are assigned a value of $0$ in $\nu$. The remaining  vertices where the eigenvector equation needs to be checked are the ones obtained by gluing $L_1$ to the leaves of $T_2.$ Now  every such vertex $v,$ satisfies $\nu(v)=0$ and there exists two neighbors of $v$, say $u\in V_1$ and $w\in V_2$ with $\nu(u)=-\nu(w)$ and furthermore $\nu$ is $0$ on every remaining neighbor, clearly implying the eigenvector equation at $v.$  	 
%	Observing that ${\|\nu_{S_1\cup S_2}\|_2}^2=\Omega_\lambda (\e)$ with $|S_1\cup S_2|=2k$ finishes the construction.
%	Since this construction is valid for infinitely many $n$,  { Lemma \ref{lem:dense} implies that the set of radial eigenvalues
%	 is dense in $(-2\sqrt{d},2\sqrt{d})$, as desired.}
\end{proof}

We next show that $G$ is nearly Ramanujan. 

\section{The spectrum of $G$}
\begin{prop}
\label{t31}
For every fixed $\varepsilon >0$, if $m$ is sufficiently large then
the absolute value of every nontrivial eigenvalue of $G$ is at most
$$ (\frac{3d-1}{\sqrt {d(2d-1)}} +\varepsilon) \sqrt d.$$ 
\end{prop}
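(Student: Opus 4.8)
The plan is to bound the Rayleigh quotient $\langle A_G x, x\rangle / \langle x, x\rangle$ for an arbitrary nontrivial (i.e. mean-zero against the Perron vector) unit eigenvector $x$ by decomposing $x$ according to the three natural pieces of $G$: the ``Ramanujan part'' $H' = H - V_1$ (what remains of $H$ after deleting the interior of $T_1$), and the three trees $T_1, T_2, T_3$. The graph $G$ is obtained by gluing these pieces along the leaf sets $L_1$ and $L_2$, so the edge set of $G$ is a disjoint union of the edges internal to each piece plus the gluing edges. Accordingly I would write
\begin{equation}\label{eqn:decomp}
\langle A_G x, x\rangle = \sum_{e \in E(H')} 2 x_a x_b + \sum_{i=1}^{3}\sum_{e\in E(T_i)} 2 x_a x_b + (\text{gluing terms}),
\end{equation}
and bound each group. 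For the tree pieces, since every $d$-ary tree of depth $D$ has all eigenvalues in $(-2\sqrt d, 2\sqrt d)$ (cited from \cite{hoory}), the contribution of $T_i$ is at most $2\sqrt d \,\|x_{T_i}\|_2^2$. For the $H'$ piece, I would relate $A_{H'}$ back to $A_H$: since $H$ is Ramanujan, $\langle A_H y, y\rangle \le 2\sqrt d \|y\|_2^2 + (\text{Perron correction})$ for mean-zero $y$, and $A_{H'}$ is a principal submatrix of $A_H$ (deleting rows/columns in $V_1$), so the quadratic form only loses the edges from $L_1$ into $V_1$; hence $\langle A_{H'} x_{H'}, x_{H'}\rangle \le 2\sqrt d \,\|x_{H'}\|_2^2 + o(\sqrt d)$.

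The main obstacle is clearly the gluing terms and the interface. The issue is that the naive bound—apply $2|x_a||x_b| \le t x_a^2 + t^{-1} x_b^2$ edge by edge, then reassemble—will produce a constant strictly worse than $\frac{3d-1}{\sqrt{d(2d-1)}}\sqrt d$ unless one is careful about the degrees. Here is the point: a leaf vertex in, say, $L_1$ has in $G$ exactly $d+1$ neighbors, of which some lie in $V_1$ (its tree-parent direction, accounting for degree $1$ if it was a genuine leaf of $T_1$... actually in this construction the $L_1$ vertices after deleting the matching have degree $d$ in $H'$ and degree $1$ toward $V_1$; similarly they get identified with leaves of $T_2$). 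So the right way is to set up a weighting: assign to each vertex $v$ a ``potential'' and split the contribution of each edge $ab$ between its endpoints in proportion that reflects the local structure, then optimize the splitting parameter. Concretely, I expect to introduce a single scalar parameter (morally the ratio between how much edge-mass we charge to a tree endpoint versus a Ramanujan endpoint), bound everything in terms of it, and find that the optimum is exactly $\frac{3d-1}{\sqrt{d(2d-1)}}$—the $3d-1$ in the numerator coming from combining a $\sqrt d$ from each of the $d$-ary tree bound, the $2\sqrt d$-type bound on $H'$, and the interface edges, while the $\sqrt{2d-1}$ in the denominator is the signature of the $(2d{-}1)$-regular-like growth at the glued interface (cf. the girth Lemma~\ref{l11}).

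The steps in order: (1) fix a nontrivial unit eigenvector $x$ with eigenvalue $\mu$, and reduce to showing $|\langle A_G x, x\rangle| \le (\tfrac{3d-1}{\sqrt{d(2d-1)}}+\varepsilon)\sqrt d$; by replacing $x$ with a combination of $x$ and its image under the obvious $T_2 \leftrightarrow T_3$ / reflection symmetries if helpful, but more simply just work directly. (2) Write the decomposition \eqref{eqn:decomp} and handle the three tree blocks by the $(-2\sqrt d, 2\sqrt d)$ spectral containment. (3) Handle the $H'$ block by passing to $H$, using the Ramanujan bound and controlling the mean-zero correction (here the largeness of $m$, and hence the smallness of $|V_1|/m$, is used so the Perron-vector correction is absorbed into $\varepsilon$). (4) For the gluing edges, use the AM–GM split $2|x_a x_b| \le \theta\, x_a^2 + \theta^{-1} x_b^2$ with a parameter $\theta$ to be chosen, charging the $x_a^2$ terms to whichever block keeps the per-vertex coefficient matched to that block's degree, and similarly on the other side. (5) Collect: the coefficient in front of $\|x_{\text{block}}\|_2^2$ in each block becomes (block bound) plus (gluing contribution as a function of $\theta$); bound $\langle A_G x,x\rangle$ by $\max_{\text{blocks}}(\cdots)\cdot \|x\|_2^2$, then minimize over $\theta>0$. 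The minimizing $\theta$ yields $\frac{3d-1}{\sqrt{d(2d-1)}}$, and the $o(1)$ error terms (finite-depth tree corrections, $|V_1|/m \to 0$, Perron correction) are all swallowed by $\varepsilon$ for $m$ large. This also transparently shows the bound is not $2\sqrt d + o(1)$, consistent with Remark~\ref{rem:vertex} and Kahale's theorem.
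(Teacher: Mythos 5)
Your steps (1)--(3) and the naive version of (4) reproduce the paper's opening moves: the paper writes $A_G=A_H-A_M+A_{T_2}+A_{T_3}$, bounds the $H$-block by the Ramanujan property plus a Perron correction of size $O(n/m)$, and bounds each tree block by a weighted Cauchy--Schwarz along edges oriented toward the root (its Lemma~\ref{l32}, which gives coefficient $2\sqrt d$ on internal vertices but only $\sqrt d$ on leaves --- a refinement you need, since the leaves are charged again by the $H$-block; quoting only the spectral containment of the tree in $(-2\sqrt d,2\sqrt d)$ would double-charge the interface). Assembling these gives $|\mu|\le 2\sqrt d+(\sqrt d+1)\sum_{u\in L_1\cup L_2}g^2(u)+o(1)$, i.e.\ a bound of roughly $3\sqrt d$ in the worst case where the eigenvector concentrates on the interface.

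The genuine gap is in your steps (4)--(5): no choice of a splitting parameter $\theta$ in the AM--GM step will produce the constant $\frac{3d-1}{\sqrt{d(2d-1)}}$. The bottleneck vertices in $L_1$ unavoidably receive $2\sqrt d$ per unit mass from the black-box Ramanujan bound on $H$, plus at least order $\sqrt d$ from the $T_2$-leaf edge and the removed matching, so any purely local charging scheme bottoms out near $3\sqrt d$; the paper explicitly remarks that this is all one gets without further input. The missing idea is global: one must show that $\sum_{u\in L_1\cup L_2}g^2(u)=o(1)$ whenever $|\mu|\ge(b+\varepsilon)\sqrt d$ with $b=c+1/c$, $c=\sqrt{(2d-1)/d}$. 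The paper does this with Kahale's growth lemma (Lemma~\ref{l33}): it constructs an explicit positive super-harmonic test function $s$, equal to $c^i d^{-i/2}$ on the $i$-th level of the double tree and governed outside it by the recursion $x_{i+1}=\min\{b+\varepsilon-1/x_i,\,c\}$, and deduces that $|X_j|s_j^2$ grows exponentially past the interface, forcing the eigenvector mass at level $r$ to be negligible. This propagation argument is where the constant $b$ actually comes from (it is the threshold at which the recursion $x+1/x=b$ has fixed point $c$), and it is entirely absent from your proposal; indeed, by Remark~\ref{rem:vertex} the graph is provably not Ramanujan, so the constant cannot be recovered by edge-local bookkeeping alone but only by an argument that sees the eigenfunction's forced growth away from the glued trees.
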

\begin{remark}\label{rem:vertex}
For every fixed $d,$ the number 
$\frac{3d-1}{\sqrt {d(2d-1)}}$ is
smaller than 
$\frac{3}{\sqrt 2} = 2.12132..$  
For $d=2$ (cubic graphs) the number is $5/\sqrt 6 =2.04124.. $. 
Therefore, the graph $G$ is close to being Ramanujan.\\
%\noindent
%\textbf{Vertex Expansion and spectral lower bound} 
	However, for all
$r>1$ it is not quite Ramanujan. Indeed, it contains a set
of vertices $Y$, namely the set of all vertices in levels
$r-1,r-3,r-5,...$ of the two trees $T_1,T_2$, that expands by a
factor of less than $(d+1)/2$. {By Theorem 4.1 in \cite{Ka}} such
a graph must contain a nontrivial eigenvalue of absolute value
bigger than $2 \sqrt d+\delta(d,r)$ for some positive
$\delta(d,r)$.
\end{remark}
To prove Proposition \ref{t31}, we need the following simple lemma about the spectrum of finite
$d$-ary trees.
\begin{lemma}
\label{l32}
Let $T$ be a finite $d$-ary tree, that is, a tree with a root $r$ of
degree $d+1$ in which every non-leaf has $d$ children. Let
$A_T$ denote the adjacency matrix of $T$, let
$W$ be the set of its non-leaves 
and let $L$ be the set of its leaves. Then for
any vector $f$ supported on $W \cup L$,
$$
|f^T A_{T}f|\le 2\sqrt{d} \sum_{w \in W} f(w)^2 +
\sqrt{d} \sum_{v \in L} f(v)^2.
$$ 
\end{lemma}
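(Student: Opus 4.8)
The plan is to bound the quadratic form $f^T A_T f = 2\sum_{(x,y)\in E(T)} f(x)f(y)$ edge by edge, using the Cauchy--Schwarz-type inequality $2ab \le t a^2 + t^{-1}b^2$ with a carefully chosen weight $t$ on each edge so that the total weight landing on any vertex matches the claimed bound. Concretely, I would root the tree and, to each edge between a vertex $x$ at level $i$ and its child $y$ at level $i+1$, assign the split $2f(x)f(y) \le \sqrt d\, f(x)^2 + \frac{1}{\sqrt d} f(y)^2$. Summing over all edges, the coefficient of $f(w)^2$ for a non-leaf $w$ collects $\sqrt d$ from each of its (at most $d$) child-edges going down, contributing at most $d\cdot \frac{1}{\sqrt d}\cdot\text{(no --- wait)}$; more carefully, $w$ appears as the \emph{upper} endpoint of its (up to $d$) downward edges, contributing $d\cdot\sqrt d$... which is too big. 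So the correct choice is to balance each edge symmetrically in a way that exploits the degree: use $2f(x)f(y)\le \frac{1}{\sqrt d}f(x)^2\cdot(\text{something}) $. Let me restate the intended scheme: assign to the edge from parent $x$ to child $y$ the bound $2f(x)f(y)\le \sqrt d\, f(y)^2 + \frac{1}{\sqrt d} f(x)^2$ when accounting from the child's side, but since each internal vertex has $d$ children and one parent, the coefficient of $f(w)^2$ for an internal non-root $w$ becomes $d\cdot\frac{1}{\sqrt d} + \sqrt d = 2\sqrt d$ (the $\frac{1}{\sqrt d}$ term once per child, the $\sqrt d$ term once from the edge to its parent), the root gets $d\cdot \frac{1}{\sqrt d} = \sqrt d \le 2\sqrt d$, and a leaf $v$ gets exactly $\sqrt d\, f(v)^2$ from its single edge. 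This yields exactly the stated inequality.

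So the key steps, in order, are: (i) write $f^T A_T f = 2\sum_{\text{edges}} f(x)f(y)$ and fix the root; (ii) for each parent--child edge $(x,y)$ apply the weighted AM--GM inequality $2f(x)f(y) \le \frac{1}{\sqrt d} f(x)^2 + \sqrt d\, f(y)^2$; (iii) sum over all edges and regroup the resulting sum by vertex, counting for each vertex how many times it occurs as a parent endpoint versus a child endpoint; (iv) verify that non-leaves accumulate a coefficient $\le 2\sqrt d$, the root accumulates $\sqrt d \le 2\sqrt d$, and each leaf accumulates exactly $\sqrt d$; (v) apply the same argument to $-f$ (equivalently note the bound is even in $f$) to get the absolute value. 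I would also remark that the asymmetry of the weights ($\frac{1}{\sqrt d}$ up, $\sqrt d$ down) is exactly what makes the $d$ children "dilute" to a total of $\sqrt d$ rather than $d\sqrt d$.

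The only real subtlety --- and the step most worth double-checking --- is the bookkeeping in step (iii): making sure each edge is counted once and that the parent/child roles are assigned consistently, so that an internal vertex genuinely collects $\sqrt d$ (from its up-edge, where it plays the child role $y$) plus $d$ copies of $\tfrac{1}{\sqrt d}$ (from its $d$ down-edges, where it plays the parent role $x$), totaling $2\sqrt d$. Leaves have no down-edges, so they only collect the $\sqrt d$ coefficient, matching the weaker bound stated for $L$; the root has no up-edge, so it only collects $d\cdot\tfrac{1}{\sqrt d}=\sqrt d$, comfortably within $2\sqrt d$. Everything else is a one-line invocation of the scalar inequality $2ab\le ta^2+t^{-1}b^2$ with $t=1/\sqrt d$, so there is no genuine obstacle here; the lemma is essentially a clean accounting identity dressed up as an inequality.
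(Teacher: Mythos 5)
Your proof is correct and is essentially the paper's own argument: the same edge-by-edge application of $2ab\le ta^2+t^{-1}b^2$ with $t=\sqrt d$ weighted toward the child endpoint, followed by the same per-vertex accounting. One tiny bookkeeping slip: the root has $d+1$ children (its degree is $d+1$ and it has no parent), so it collects $(d+1)/\sqrt d=\sqrt d+1/\sqrt d$ rather than $\sqrt d$, which is still at most $2\sqrt d$, so the conclusion is unaffected.
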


\begin{proof} Orient all edges towards the root
$r$. Then 
\begin{align*}
|f^T A_{T}f|&=\sum_{u\to v}2|f(u)||f(v)|\\
&= \sum_{u\to v}2t |f(u)|\frac{|f(v)|}{t} \text{ for any choice of
$t>0$}\\
&\le \sum_{u\to v}\left(t^2f^2(u)+ \frac{1}{t^2}f^2(v) \right)\\
	&= \frac{d+1}{t^2}f^2(r)+t^2\sum_{u\in L}f^2(u)+\sum_{u\in
W-\{r\}}f^2(u)\left(t^2+\frac{d}{t^2}\right)\\ 
&\le2\sqrt{d}\sum_{u\in W }f^2(u) +\sqrt d \sum_{u \in L} f^2 (u) 
\text{ by choosing
$t^2=\sqrt{d}$}
\end{align*}
\end{proof}

We also need the following lemma of Kahale about growth rate of eigenfunctions.
\begin{lemma}(\cite[Lemma 5.1]{Ka}) 
\label{l33}
Consider a graph on a vertex set $U$, and let $A$ denote its
adjacency matrix.
Let $X$ be a set of vertices. 
Let $h$ be a positive integer and let $s$ be a function
on $U$. Let $X_i$ be the set of all vertices at distance $i$ from
$X$, and assume that the following conditions hold.
\begin{enumerate}
\item
For $h-1 \leq i,j \leq h$ all vertices in $X_i$ have the same
number of neighbors in $X_j$.
\item
The function $s$ is constant  on $X_{h-1}$ and on $X_h$.
\item
The function $s$ is positive and $As(v)  \leq |\mu| s(v)$ for every
$v $ of distance at most $h-1$ from $X$, where
$\mu$ is a nonzero real.
\end{enumerate}
Then for any function $g$ on $U$ which satisfies
$|Ag(u)|=|\mu| |g(u)|$ for all vertices $u$ of distance at most
$h-1$ from $X$, we have:
$$
\frac{\sum_{v \in X_h} g(v)^2}{\sum_{v \in X_h} s(v)^2}
\geq 
\frac{\sum_{v \in X_{h-1}} g(v)^2}{\sum_{v \in X_{h-1}} s(v)^2}.
$$
\end{lemma}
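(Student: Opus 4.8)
\section*{Proof proposal for Lemma \ref{l33}}

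The lemma is a discrete maximum principle: the eigenfunction $g$ must grow at least geometrically with rate $|\mu|$ in passing from the shell $X_{h-1}$ to the shell $X_h$, while the super-harmonic test function $s$ grows at most that fast, so the shell ratio $\sum_{X_i}g^2/\sum_{X_i}s^2$ ought to be non-decreasing across the two outermost shells. The plan is to make both hypotheses visible on a single positive semidefinite weighted Laplacian via a ground-state transform, and then to compare the two shells by a summation by parts that uses the regularity assumptions (1)--(2) only at the boundary.

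First I would make two harmless reductions. Replacing $g$ by $|g|$ changes nothing in the conclusion (as $g(v)^2=|g(v)|^2$) and can only help the hypothesis, since $A|g|(u)=\sum_{v\sim u}|g(v)|\ge|Ag(u)|=|\mu|\,|g(u)|$; so I may assume $g\ge 0$ and $Ag(v)\ge|\mu|g(v)$ whenever $\mathrm{dist}(v,X)\le h-1$. Moreover every inequality in the statement refers only to the ball $B_h:=\{v:\mathrm{dist}(v,X)\le h\}$, so I may work with the subgraph induced on $B_h$ (for $v\in B_{h-1}$ all neighbours lie in $B_h$) and assume $s>0$ on $B_h$; if $s\equiv 0$ on $X_h$ the asserted bound is vacuous.

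Next comes the ground-state transform. Set $w:=g/s\ge 0$ on $B_h$ and let $\mathcal L$ be the weighted graph Laplacian on $B_h$ with edge weights $W_{uv}=s(u)s(v)$, so that $\langle\phi,\mathcal L\phi\rangle=\tfrac12\sum_{u\sim v}W_{uv}(\phi(u)-\phi(v))^2\ge 0$. The one computation I would do carefully is the pointwise identity, valid for interior $v$ (all of whose neighbours lie in $B_h$),
$$(\mathcal Lw)(v)=s(v)\big(w(v)\,As(v)-Ag(v)\big),$$
which follows from $s(u)w(u)=g(u)$; combined with $As(v)\le|\mu|s(v)$, $Ag(v)\ge|\mu|g(v)=|\mu|s(v)w(v)$ and $w(v)\ge 0$ it gives $(\mathcal Lw)(v)\le 0$ for every $v\in B_{h-1}$. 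This is the only place the hypotheses on $g$ and $s$ are used: together they say precisely that $w$ is sub-harmonic for $\mathcal L$ in the interior.

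Finally the shell comparison. Expanding $0\le\langle w,\mathcal Lw\rangle$ over $B_h=B_{h-1}\sqcup X_h$, the interior part $\sum_{v\in B_{h-1}}w(v)(\mathcal Lw)(v)$ is $\le 0$ termwise, so $\sum_{v\in X_h}w(v)(\mathcal Lw)(v)\ge\langle w,\mathcal Lw\rangle$. I would then evaluate the boundary part explicitly: using that $s\equiv\sigma$ on $X_{h-1}$ and $s\equiv\tau$ on $X_h$ and that, by (1), each vertex of $X_h$ has a fixed number $c$ of neighbours in $X_{h-1}$ and each vertex of $X_{h-1}$ a fixed number $b$ of neighbours in $X_h$, one gets $\sum_{v\in X_h}w(v)(\mathcal Lw)(v)=\sigma\tau\big(c\sum_{X_h}w^2-P\big)+E_{\partial\partial}$, where $P$ is the sum of $w(u)w(v)$ over all edges $uv$ between $X_{h-1}$ and $X_h$ and $E_{\partial\partial}\ge 0$ is the Dirichlet energy inside $X_h$. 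Writing $\langle w,\mathcal Lw\rangle=E_{II}+E_{I\partial}+E_{\partial\partial}$ with $E_{II}\ge 0$ and $E_{I\partial}=\sigma\tau\sum_{\mathrm{cross}}(w(u)-w(v))^2$, cancelling $E_{\partial\partial}$, dropping $E_{II}$, and using $\sum_{\mathrm{cross}}w(u)^2=b\sum_{X_{h-1}}w^2$, $\sum_{\mathrm{cross}}w(v)^2=c\sum_{X_h}w^2$, the inequality collapses to $P\ge b\sum_{X_{h-1}}w^2$. On the other hand Cauchy--Schwarz on the same cross-edges gives $P\le\big(b\sum_{X_{h-1}}w^2\big)^{1/2}\big(c\sum_{X_h}w^2\big)^{1/2}$, and combining the two forces $b\sum_{X_{h-1}}w^2\le c\sum_{X_h}w^2$; via the edge count $b|X_{h-1}|=c|X_h|$ and $\sum_{X_i}s^2=|X_i|\,(s|_{X_i})^2$ this is exactly $\sum_{X_{h-1}}g^2/\sum_{X_{h-1}}s^2\le\sum_{X_h}g^2/\sum_{X_h}s^2$. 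I expect this last paragraph to be the delicate part: one must pick the weighting $W_{uv}=s(u)s(v)$ so the transform annihilates the zeroth-order term, recognize that sub-harmonicity lets the interior energy be discarded with the correct sign, and then see that the single cross-term $P$ is squeezed from below by $b\sum_{X_{h-1}}w^2$ and from above by $\sqrt{bc}$ times the product of the shell norms; everything else is routine.
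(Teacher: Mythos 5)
The paper does not actually prove this statement --- it is quoted verbatim as Lemma~5.1 of Kahale's paper \cite{Ka} and used as a black box --- so there is no in-paper proof to match yours against. Judged on its own, your argument is correct and self-contained. I checked the three load-bearing steps. (i) The reduction to $g\ge 0$ with $Ag\ge|\mu|g$ on $B_{h-1}$ is valid since $A|g|(u)\ge|Ag(u)|$, and restricting to the subgraph induced on $B_h$ is legitimate because every vertex of $B_{h-1}$ has all its neighbours in $B_h$ (crucially, you never invoke the eigenvalue equation or super-harmonicity at vertices of $X_h$, so cutting off their $X_{h+1}$-neighbours is harmless). (ii) The identity $(\mathcal Lw)(v)=s(v)\bigl(w(v)As(v)-Ag(v)\bigr)$ for the weights $W_{uv}=s(u)s(v)$ is right, and together with the hypotheses it gives $(\mathcal Lw)(v)\le 0$ on $B_{h-1}$. (iii) The boundary bookkeeping checks out: the within-$X_h$ energy $E_{\partial\partial}$ appears identically on both sides and cancels without needing condition (1) for $i=j=h$, the only regularity actually used is the constancy of the cross-degrees $b$ and $c$, and the chain $b\sum_{X_{h-1}}w^2\le P\le\sqrt{b\sum_{X_{h-1}}w^2}\sqrt{c\sum_{X_h}w^2}$ together with $b|X_{h-1}|=c|X_h|$ delivers exactly the claimed ratio inequality. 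Two trivialities you could make explicit: the statement implicitly assumes $X_h\neq\emptyset$ (otherwise the conclusion is $0/0$), which also guarantees $b,c\ge 1$; and the case $\sum_{X_{h-1}}w^2=0$, where the Cauchy--Schwarz squeeze degenerates, is the case where the conclusion is vacuous anyway. What your route buys over simply citing \cite{Ka} is a short verifiable proof that isolates precisely which hypotheses enter where (condition (1) only through the two cross-degree counts, condition (2) only to convert between $w$- and $s$-normalized shell sums), at the cost of introducing the weighted-Laplacian machinery that Kahale's original, more computational shell-by-shell argument avoids.
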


Equipped with the above lemma we now proceed to proving Proposition \ref{t31}.
\begin{proof}[Proof of Proposition \ref{t31}]

The adjacency matrix $A_G$ of $G$  is the sum
$A_G=A_H-A_M+A_{T_2}+A_{T_3}$, where $A_H$ is the adjacency matrix
of $H$,  $A_M$ is the adjacency matrix of the matching $u_iv_i$ and
$A_{T_2}, A_{T_3}$ are the adjacency matrices of the trees 
$T_2$ and $T_3$, respectively. It is convenient  to view all the
graphs $H,T_2,T_3,M$ as graphs on the set of all vertices of
$G$, which is $U=V \cup V_2 \cup V_3$. Thus all the matrices above
have rows and columns indexed by the  set $U$, and each of the
corresponding graphs has many isolated vertices.

Put $b=\frac{3d-1}{\sqrt {d(2d-1)}} $.
We have to show that every nontrivial eigenvalue $\mu$ of $G$
has absolute value at most $(b+\varepsilon)\sqrt d$ for any
$\varepsilon>0$ provided $m$ is sufficiently large. 
Let $g: U \rightarrow R$
be an eigenvector of $\mu$ satisfying $\sum_{v \in U} g(u)^2=1$.
As $g$ is orthogonal to the top eigenvector,
$\sum_{v \in U} g(v)=0$. 
The total number of vertices in $V_2 \cup V_3$ is smaller
than $2n$, and therefore, by Cauchy-Schwartz, $|\sum_{v \in V_2
\cup V_3} g(v)| \leq \sqrt {2n}$. It thus follows that
$|\sum_{v \in V} g(v)| \leq \sqrt {2n}.$ Considering the projection
of the restriction of $g$ to $V$ on the  all ones vector and its
complement we conclude that
\begin{equation}
\label{e34}
|g^T A_H g| \leq 2 \sqrt d \sum_{v \in V} g^2(v)
+\frac{(d+1)2n}{m}.
\end{equation}
Recall that $L_1$ is the set of leaves of $T_2$ (and $T_1$). By
Lemma \ref{l32} 
\begin{equation}
\label{e35}
|g^T A_{T_2} g| \leq 2 \sqrt d \sum_{v \in V_2} g^2(v)
+\sqrt d \sum_{v \in L_1} g^2(v).
\end{equation}
Similarly
\begin{equation}
\label{e36}
|g^T A_{T_3} g| \leq 2 \sqrt d \sum_{v \in V_3} g^2(v)
+\sqrt d \sum_{v \in L_2} g^2(v).
\end{equation}
The contribution of the omitted matching can be bounded as follows
\begin{equation}
\label{e37}
|g^T A_{M} g| = |\sum_{i=1}^n 2 g(u_i)g(v_i)| \leq
\sum_{v \in  L_1 \cup L_2} g^2(v).
\end{equation}
Combining (\ref{e34}), (\ref{e35}), (\ref{e36}), (\ref{e37})
we conclude that
\begin{align}\label{keybound}
|\mu|=|g^T A g| &\leq 2 \sqrt d \sum_{v \in U} g^2(v)
+ (\sqrt d+1) \sum_{u \in L_1 \cup L_2} g^2 (u)
+\frac{(d+1)2n}{m},\\
\nonumber
&=2 \sqrt d + (\sqrt d+1) \sum_{u \in L_1 \cup L_2} g^2 (u)
+\frac{(d+1)2n}{m}.
\end{align}
In order to complete the proof, it thus suffices to show that 
if $|\mu| \geq   (b+\varepsilon) \sqrt d$, then, as $m$ tends
to infinity, the sum $\sum_{u \in L_1 \cup L_2} g^2 (u)$ tends to
zero. This is done using Lemma \ref{l33}, as described next.\\

\begin{enumerate} 
\item We first bound the sum $\sum_{u \in L_2} g^2(u)$. This is simple
and works even if we only assume that $|\mu| \geq 2 \sqrt d$.
Indeed, starting with $X=\{v'\}$, let $X_i$ be the set of vertices
of distance $i$ from $X$. Define $s(v)=d^{-i/2}$ for all $v \in
X_i$ for $0 \leq i \leq r+t$, where $r+t$ is the largest
integer smaller than half the girth of $H$. It is easy to check
that the conditions of Lemma \ref{l33} hold. Thus by its conclusion
the sum $\sum_{v \in X_i} g^2(v)$ is nondecreasing in $i$ for
all $i \geq r$. Since this sum for $i=r$ is exactly
$\sum_{v \in L_2} g^2(v)$ and $t$ tends to infinity with
$m$, and since the sum over all $r \leq i \leq r+t$ is at most $1$,
it follows that the sum for $i=r$ is negligible.

\item Bounding the sum $\sum_{u \in L_1} g^2(u)$ is harder. 
Here we use the assumption that $|\mu | \geq (b+\varepsilon) \sqrt d$
where
$b=\frac{3d-1}{\sqrt {d(2d-1)}} $.
Define $X=X_0=\{u,u'\}$
and let $X_i$ denote the set of all vertices of $G$ of distance
exactly $i$ from $X$. Put $c=\sqrt{(2d-1)/d}$ and note that
$c+1/c=b$. Define a sequence of reals $s_0,s_1, s_2, \ldots $ as
follows. For $0 \leq i \leq r$, $s_i=c^id^{-i/2}$. For
$i=r+1$, $s_{r+1}=c^{r-1} d^{-(r+1)/2}$ and for all
$i \geq 1$, $s_{r+1+i}=\alpha_i d^{-(r+1+i)/2}$ where
the numbers $\alpha_i$ are defined by setting $\alpha_0=c^{r-1}$
and $\alpha_i/\alpha_{i-1}=x_i$ for $i \geq 1$ with
$x_1=1/c=\sqrt {d/(2d-1)}$ and for $i \geq 1$,
$$
x_{i+1}=\min \{b +\varepsilon-\frac{1}{x_i}, c\}.
$$
Using the sequence $s_i$ define a function $s$ on the vertices
in the union $\cup_{i \leq r+t} X_i$, where $r+t$ is smaller than
half the girth of $H$, by putting
$s(v)=s_i$ for all $v \in X_i$.
We proceed to show that for every vertex $v \in \cup_{i<r+t} X_i$,
\begin{equation}
\label{e93}
As(v) \leq |\mu| s(v). 
\end{equation}
For $v \in X=X_0$ this is equivalent to
$$
\frac{c}{\sqrt d} (d+1) \leq |\mu| 
$$
which is certainly true as
$$
|\mu| > b \sqrt d =(c+\frac{1}{c}) \sqrt d  \geq 
\frac{c}{\sqrt d} (d+1).
$$
For $v \in X_i$ with $1 \leq i \leq r-1$ the required inequality is
$$
\frac{\sqrt d}{c} s_i + \frac{d c s_i}{\sqrt d} \leq |\mu| s_i
$$
which follows from the fact that $1/c+c=b \leq b+\varepsilon$.
For $v \in X_r$ the inequality is
$$
\frac{2 \sqrt d}{c} s_i + \frac{d-1}{\sqrt d} \frac{s_i}{c} \leq
|\mu| s_i.
$$
For this it suffices to check that  
$$
\frac{2}{c}+\frac{d-1}{dc} \leq b+\varepsilon 
$$ 
which holds as the left hand side is equal to $b$.
For $v \in X_{r+1}$ it suffices  to check that
$$
\frac{\sqrt d s_{r+1}}{x_1} +\sqrt d x_2 s_{r+1} \leq |\mu| s_{r+1}
$$
which holds as $x_2 \leq b+\varepsilon-1/x_1$ by its definition.
Finally, for $v \in X_{r+1+i}, i \geq 1$ the required inequality
is equivalent to
$
\sqrt d \alpha_{i-1} + \sqrt d \alpha_{i+1} \leq |\mu| \alpha_i,
$
that is ,
$
\sqrt d \frac{1}{x_i} + \sqrt d x_{i+1} \leq (b+\varepsilon) \sqrt d
$
which again holds by the definition of $x_{i+1}$.
This completes the proof of (\ref{e93}). 
Next we observe that $x_1=1/c > 1/\sqrt 2 $, that $x_{i+1} \geq x_i$
for all $i$, and that if $x_i < c-\varepsilon$ then $x_{i+1} \geq
x_i + \varepsilon$.  Indeed $x_1 =\sqrt {d/(2d-1)} >1/\sqrt 2$, and
$x_{i+1} \leq c$ for all $i$ by the definition of $x_{i+1}$.  The
function $g(x)=b-1/x$ is increasing and concave in the interval 
$[1/c,c]$ and as $g(x)=x$ at the endpoints of the interval, $g(x)
 \geq x$  for all $x$ in the interval implying that 
$x_{i+1} =\min\{ g(x_i) +\varepsilon, c\} \geq x_i$ for all
$x_i \in [1/c,c]$ and that $x_{i+1} \geq x_i + \varepsilon$ if 
$x_i \leq c-\varepsilon$. 

By the above discussion it follows that $x_i$ is at least
$c > \sqrt{3/2}$ for all $i \geq 1/\varepsilon$ implying that 
the sequence $|X_j| s_j^2$ is increasing exponentially 
for $j > r+1/\varepsilon$. Between levels $r$ and $r+1/\varepsilon$
the terms of this sequence decrease by a factor larger than 
$1/c$ in each step, and it therefore follows that the term number
$r$ of this sequence is negligible compared to any term number
$r +\omega(1)$. This together with Lemma \ref{l33} completes the
proof.
\end{enumerate}
\end{proof}
Using the above ingredients we now verify all the claims in Theorem \ref{thm:totalloc}.
\begin{proof}[Proof of Theorem \ref{thm:totalloc}]
By the construction with described in Section \ref{construction},  there exists a graph $G=(V,E)$ with the following properties (with $r=\lfloor \alpha \log_d(m)\rfloor$):
\begin{enumerate}
\item $|V|=M=m+2n$ where $n=\frac{d^{r+1}+d^r-2}{d-1}=d^{r}\left(1+O(\frac{1}{d})\right).$
\item By Lemma \ref{girth}, and the choice of $r,$ we have $$\girth(G)\ge 2\log_{2d-1}((d+1) d^{r-1}) \ge \log_{2d-1}(d^{r})\ge 2\alpha\log_{2d-1}(m) = 2\alpha\log_d(m)\cdot (1-O(\log^{-1}(d))).$$
Thus the second claim in the theorem is verified by observing that $M=m+O(m^{\alpha}).$
\item By Proposition \ref{t31}, $$|\lambda_i(A_{M})|\le (3/\sqrt{2})\sqrt{d}$$ for all nontrivial eigenvalues $i\neq 1$ of the adjacency matrix $A_M$.
\item The third claim follows by Lemmas \ref{sym1}, \ref{lem:dense}, and \ref{global}, and noticing that $|V_1\cup V_2|=O(d^{r})=O(m^{\alpha}).$
\end{enumerate}
The final claim in the statement of the theorem is an easy consequence of Lemma \ref{lem:dense} which states that the set of finite $d-$ary tree eigenvalues is a dense subset of $(-2\sqrt{d}, 2\sqrt{d}).$ Furthermore, Remark \ref{partial} follows by choosing $S$ to be the top $\lfloor \varepsilon r \rfloor$ levels of $T_1.$
%Let $d$ a prime and $\alpha\in (0,1/3)$ be given. Let $H=(V,E)$ be a $(d+1)$-regular Ramanujan graph with $m$ vertices and girth $4/3\log_d(m)$; by
%\cite{LPS} such graphs exist for infinitely mant $m$. Set $r=\lfloor \alpha m\rfloor$ and note that
%\begin{equation}\label{eqn:girthm}
% \frac{4}{3} \log_d m  > 4r.
%\end{equation}
%
%
%The proof follows by 
%[Fill in proof of Theorem \ref{thm:totalloc}, which amounts to just combining what is written below and respecting the constraint \eqref{eqn:girthm} Note that $N\approx 2\cdot d^{r}$ is the size of the localized set and the total number of vertices is about $m+2N$. Some of the $\approx$ need to be written down concretely.
%
\end{proof}

We conclude by explaining how to modify the construction to produce many localized eigenvectors.
\begin{proof}[Proof of Theorem \ref{thm:many}] The proof follows from the observation that in the construction described in Section \ref{construction}, one can glue several trees to $H$ `far away' from each other to maintain high girth and every other property mentioned in Theorem  \ref{thm:totalloc}. More  precisely,  in the construction  in Section \ref{construction}, instead of considering a tree $T_1$ rooted at $u,$ consider trees $T^{(1)}_1,T^{(2)}_1,\ldots, T^{(k)}_1$ rooted at $u_1, u_2, \ldots, u_k$ and repeat the construction $k$ times with the corresponding trees $$\{T^{(1)}_2,T^{(2)}_2,\ldots, T^{(k)}_2\} \text{ and }  \{T^{(1)}_3, T^{(2)}_3,\ldots, T^{(k)}_3\}$$ rooted at $\{u'_1,u'_2,\ldots u'_k\}$ and $\{v_1,v_2,\ldots, v_k\}$ respectively. The same arguments as before imply that the girth condition in Theorem \ref{thm:totalloc} is satisfied as long as the graph distance in $H$ between any $u_i$ and $u_j$ is at least $4r.$ Lemma \ref{lem:pack} below shows that one can take $k$ to be as large as $m^{1-4\alpha}$ where $r=\lfloor \alpha \log_{d}(m)\rfloor.$  
Finally the proof of Proposition \ref{t31} in this case follows in exactly the same way after defining the set $X$ to be $\{v_1,v_2,\ldots, v_k\}$ and $\{u_1, u'_1, u_2, u'_2, \ldots, u_k, u'_k\}$ in 1. and 2. respectively instead of $\{v\}$ and $\{u,u'\}.$ 
\end{proof}

\begin{lemma}\label{lem:pack} Given any $d+1$-regular graph $G=(V,E)$ of size $m,$ 
and any $k,$ there are at least $\frac{m (d-1)}{(d+1)d^{k}}$ vertices in $V,$ all of whose mutual distances are at least $k.$ 
\end{lemma}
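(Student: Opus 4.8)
The plan is a routine greedy packing (volume) argument. I would run the following process: maintain a set $A$ of ``available'' vertices, initialized to $A=V$, together with an initially empty set $I$. While $A\neq\emptyset$, pick an arbitrary $v\in A$, add $v$ to $I$, and delete from $A$ every vertex lying at distance at most $k-1$ from $v$ (in particular $v$ itself is deleted). When the process halts, output $I$. (We may assume $k\ge 1$; for $k\le 0$ the claimed bound is at most $m$ and $I=V$ works trivially.)

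The only estimate needed is a bound on the size of a ball of radius $k-1$ in a $(d+1)$-regular graph: since a vertex has at most $(d+1)d^{i-1}$ vertices at distance exactly $i$ for each $i\ge 1$, the ball $B(v,k-1)$ contains at most $1+\sum_{i=1}^{k-1}(d+1)d^{i-1}\le \sum_{i=0}^{k-1}(d+1)d^{i}=(d+1)\frac{d^{k}-1}{d-1}$ vertices. Hence every iteration deletes at most $(d+1)\frac{d^{k}-1}{d-1}$ vertices from $A$, so the process performs at least $\frac{m(d-1)}{(d+1)(d^{k}-1)}>\frac{m(d-1)}{(d+1)d^{k}}$ iterations, which gives $|I|\ge \frac{m(d-1)}{(d+1)d^{k}}$. (When $d=1$ the claimed bound is $0$ and there is nothing to prove, so one may assume $d\ge 2$.)

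It remains to check that $I$ has pairwise distances at least $k$. If $u,v\in I$ with $v$ chosen before $u$, then at the step when $u$ was selected it still belonged to $A$, so it was not among the vertices deleted when $v$ was processed; by construction those deleted vertices are exactly the ones at distance at most $k-1$ from $v$, and therefore $\mathrm{dist}(u,v)\ge k$. This completes the argument. There is no real obstacle here; the one place to be slightly careful is to use the (mildly lossy) geometric-series bound $\sum_{i=0}^{k-1}(d+1)d^{i}$ for the ball size rather than the exact count, since this is precisely what makes the arithmetic land on the constant $\frac{d-1}{(d+1)d^{k}}$ claimed in the statement.
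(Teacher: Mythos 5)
Your greedy construction is just an explicit way of producing the maximal set that the paper takes in one step, and the rest is the identical volume/ball-counting argument (your radius-$(k-1)$ ball bound is even slightly tighter than the paper's). The proof is correct and essentially the same as the paper's.
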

\begin{proof}Consider a maximal set $S$ of such vertices. Simple 
considerations imply that every other vertex in $V$ must be at 
distance at most $k$ from the set $S.$ Now the total number of
such vertices is at most $|S|{(d+1)d^{k}/(d-1)}$ which finishes the proof. 
\end{proof}

\bibliographystyle{alpha}
\bibliography{localized8.bbl}

\newcommand{\etalchar}[1]{$^{#1}$}
\begin{thebibliography}{ABGR18}

\bibitem[ABGR18]{ABGR}
Noga Alon, Steve Butler, Ron Graham, and Utkrisht~C Rajkumar.
\newblock Permutations resilient to deletions.
\newblock {\em Annals of Combinatorics}, 22(4):673--680, 2018.

\bibitem[ALM15]{anantharaman}
Nalini Anantharaman and Etienne Le~Masson.
\newblock Quantum ergodicity on large regular graphs.
\newblock {\em Duke Math. J.}, 164(4):723--765, 2015.

\bibitem[Ana18]{anantharaman2018delocalization}
Nalini Anantharaman.
\newblock Delocalization of schr{\"o}dinger eigenfunctions.
\newblock {\em Proceedings of the ICM, Rio de Janeiro}, 2018.

\bibitem[BL13]{bl}
Shimon Brooks and Elon Lindenstrauss.
\newblock Non-localization of eigenfunctions on large regular graphs.
\newblock {\em Israel J. Math.}, pages 1--14, 2013.

\bibitem[BLML15]{brooks2015}
Shimon Brooks, Etienne Le~Masson, and Elon Lindenstrauss.
\newblock Quantum ergodicity and averaging operators on the sphere.
\newblock {\em International Mathematics Research Notices},
  2016(19):6034--6064, 2015.

\bibitem[DV85]{de1985ergodicite}
Y~Colin De~Verdiere.
\newblock Ergodicit{\'e} et fonctions propres du laplacien.
\newblock {\em Communications in Mathematical Physics}, 102(3):497--502, 1985.

\bibitem[ES63]{ES}
Paul Erd\H{o}s and Horst Sachs.
\newblock Regul{\"a}re graphen gegebener taillenweite mit minimaler knotenzahl.
\newblock {\em Wiss. Z. Martin-Luther-Univ. Halle-Wittenberg Math.-Natur.
  Reihe}, 12(251-257):22, 1963.

\bibitem[GS18]{GS}
Shirshendu Ganguly and Nikhil Srivastava.
\newblock On non-localization of eigenvectors of high girth graphs.
\newblock {\em arXiv preprint arXiv:1803.08038. To appear in International
  Mathematics Research Notices}, 2018.

\bibitem[HH10]{hassell}
Andrew Hassell and Luc Hillairet.
\newblock Ergodic billiards that are not quantum unique ergodic (with an
  appendix by andrew hassell and luc hillairet).
\newblock {\em Annals of Mathematics}, 171(1):605--618, 2010.

\bibitem[HLW06]{hoory}
Shlomo Hoory, Nathan Linial, and Avi Wigderson.
\newblock Expander graphs and their applications.
\newblock {\em Bull. Amer. Math. Soc.}, 43(4):439--561, 2006.

\bibitem[Kah92]{kahale}
Nabil Kahale.
\newblock On the second eigenvalue and linear expansion of regular graphs.
\newblock In {\em 33rd Annual Symposium on Foundations of Computer Science,
  1992. Proceedings.}, pages 296--303. IEEE, 1992.

\bibitem[Kah95]{Ka}
Nabil Kahale.
\newblock Eigenvalues and expansion of regular graphs.
\newblock {\em Journal of the ACM (JACM)}, 42(5):1091--1106, 1995.

\bibitem[LMS{\etalchar{+}}17]{le2017quantum}
Etienne Le~Masson, Tuomas Sahlsten, et~al.
\newblock Quantum ergodicity and benjamini--schramm convergence of hyperbolic
  surfaces.
\newblock {\em Duke Mathematical Journal}, 166(18):3425--3460, 2017.

\bibitem[LPS88]{LPS}
Alexander Lubotzky, Ralph Phillips, and Peter Sarnak.
\newblock Ramanujan graphs.
\newblock {\em Combinatorica}, 8(3):261--277, 1988.

\bibitem[Mur03]{M}
M~Ram Murty.
\newblock Ramanujan graphs.
\newblock {\em Journal-Ramanujan Mathematical Society}, 18(1):33--52, 2003.

\bibitem[Nil91]{alonboppana}
Alon Nilli.
\newblock On the second eigenvalue of a graph.
\newblock {\em Discrete Mathematics}, 91(2):207--210, 1991.

\bibitem[Sar12]{sarnaksurvey}
Peter Sarnak.
\newblock Recent progress on the quantum unique ergodicity conjecture.
\newblock {\em Bull. Amer. Math. Soc}, 48:211--228, 2012.

\bibitem[Shn74]{shnirel1974ergodic}
Alexander~I Shnirel'man.
\newblock Ergodic properties of eigenfunctions.
\newblock {\em Uspekhi Matematicheskikh Nauk}, 29(6):181--182, 1974.

\bibitem[Z{\etalchar{+}}87]{zelditch1987uniform}
Steven Zelditch et~al.
\newblock Uniform distribution of eigenfunctions on compact hyperbolic
  surfaces.
\newblock {\em Duke mathematical journal}, 55(4):919--941, 1987.

\end{thebibliography}
%
%\begin{thebibliography}{1}
%\bibitem{ABGR}
%N. Alon, S. Butler, R. Graham and U. C. Rajkumar,
%Permutations resilient to deletions,
%Annals of Combinatorics 22 (2018), 673-680.
%
%\bibitem{ES}
%P. Erd\H{o}s and H. Sachs, Regul\"are Graphen gegebener Taillenweite
%mit minimaler Knotenzahl, Wiss. Z. Matin-Luther-Univ.
%Halle-Wittenberg Math.-Natur. Reihe 12(1963), 251--257.
%
%\bibitem{GS}
%S. Ganguly and N. Srivastava,
%On non-localization of eigenvectors of high girth graphs,
%arXiv:1803.08038v1
%
%\bibitem{Ka}
%N. Kahale, Eigenvalues and expansion of regular graphs, J. Assoc.
%Comput. Mach. 42 (1995), no. 5, 1091--1106.
%
%\end{thebibliography}
	
\end{document}